\newtheorem{theorem}{Theorem}[section]
\newtheorem{lemma}[theorem]{Lemma}
\newtheorem{proposition}[theorem]{Proposition}
\newtheorem{corollary}[theorem]{Corollary}
\theoremstyle{remark}
\numberwithin{equation}{section}
\newcommand{\NN}{{\mathbb{N}}}
\newcommand{\ZZ}{{\mathbb{Z}}}
\newcommand{\CC}{{\mathbb{C}}}
\newcommand{\HH}{{\mathbb{H}}}
\newcommand{\RR}{{\mathbb{R}}}
\newcommand{\QQ}{{\mathbb{Q}}}
\renewcommand{\SS}{{\mathbb{S}}}
\DeclareMathOperator{\rk}{rank}
\DeclareMathOperator{\vol}{vol}
\newcommand{\GL}{\mathrm{GL}}
\newcommand{\SL}{\mathrm{SL}}
\newcommand{\PSL}{\mathrm{PSL}}
\newcommand{\SU}{\mathrm{{SU}}}
\newcommand{\PSU}{\mathrm{{PSU}}}
\newcommand{\SO}{{\mathrm{SO}}}
\newcommand{\PSO}{{\mathrm{PSO}}}
\renewcommand{\t}{\mathsf{t}}
\newcommand{\ov}{\mathbf v}
\providecommand{\ve}{\mathbf{ e}}
\newcommand{\T}{{\mathsf{T}}}
\newcommand{\sg}{\mathsf{g}}
\DeclareMathOperator{\supp}{supp}
\providecommand{\vol}{\mathrm{vol}}
\newcommand{\diag}{\mathrm{diag}}
\newcommand{\q}{\mathsf q^{}_S}
\newcommand{\sq}{\mathsf q^0_S}
\newcommand{\SA}{\mathsf{A}}
\newcommand{\SK}{\mathsf{K}}
\newcommand{\sk}{\mathsf{k}}
\newcommand{\sa}{\mathsf{a}}
\newcommand{\SG}{\mathsf{G}}
\newcommand{\I}{\mathsf{I}}
\newcommand{\CT}{\mathsf{T}}
\newcommand{\CS}{\mathsf{S}}
\newcommand{\Ct}{\mathsf{t}}
\providecommand{\RR}{\mathbb{R}} \providecommand{\ZZ}{\mathbb{Z}}
\providecommand{\NN}{\mathbb{N}}
\newcommand{\Tree}{\mathcal{T}}
\newcommand{\Bding}{\mathcal{B}}
\newcommand{\hab}{\mathsf{s}\hspace{0.01in}}
\newcommand{\p}{\mathrm{p}}
\newcommand{\NT}{\mathbf N}
\newcommand{\VT}{\mathbf V}
\definecolor{cmd}{rgb}{1.0, 0.35, 0.21}
\begin{document}

\title[Quantitative Oppenheim for $S$-adic quadratic forms]{Quantitative Oppenheim conjecture for $S$-arithmetic quadratic forms of rank $3$ and $4$}

\author{Jiyoung Han}
\address{J.~Han. Research institute of Mathematics, Seoul National University
{\it Email address: jiyoung.han.math@snu.ac.kr}}

\subjclass[2010]{22F30, 22E45, 20F65, 20E08, 37P55}

\date{}

\maketitle

\begin{abstract}
The celebrated result of Eskin, Margulis and Mozes (\cite{EMM}) and Dani and Margulis (\cite{DM}) on quantitative Oppenheim conjecture says that
for irrational quadratic forms $q$ of rank at least 5, 
the number of integral vectors $\ov$ such that $q(\ov)$ is in a given bounded interval
is asymptotically equal to the volume of the set of real vectors $\ov$ such that $q(\ov)$ is in the same interval.
 
In dimension $3$ or $4$, there are exceptional quadratic forms which fail to satisfy the quantitative Oppenheim conjecture. 
Even in those cases, one can say that almost all quadratic forms hold that two asymptotic limits are the same (\cite[Theorem 2.4]{EMM}).
In this paper, we extend this result to the $S$-arithmetic version. 
\end{abstract}


\section{Introduction}

\subsection*{History}
The Oppenheim conjecture, proved by Margulis \cite{Mar},
says that the image set $q(\mathbb Z^n)$ of integral vectors of an isotropic irrational quadratic form $q$ of rank at least 3 is dense in the real line (See \cite{Op} for the original statement of Oppenheim conjecutre).

Let $(a,b)$ be a bounded interval and let $\Omega\subset \RR^n$ be a convex set containing the origin. 
Dani and Margulis \cite{DM} and Eskin, Margulis and Mozes \cite{EMM} established a quantitative version of Oppenheim conjecture:
for a quadratic form $q$, let $\VT_{(a,b),\Omega}(T)$ be the volume of the region $T\Omega \cap q^{-1}(a,b)$ and $\NT_{(a,b),\Omega}(T)$ be the number of integral vectors in $T\Omega \cap q^{-1}(a,b)$.
They found that if an irrational isotropic quadratic form $q$ is of rank greater than or equal to 4 and is not a split form, then $\VT_{(a,b),\Omega}(T)$ approximates $\NT_{(a,b),\Omega}(T)$ as $T$ goes to infinity.
Moreover, in this case, the image set $q(\ZZ^n)$ is equidistributed in the real line.

Their works heavily rely on the dynamical properties of orbits of a certain semisimple Lie group, generated by its unipotent one-parameter subgroups, on the associated homogeneous space.

The $S$-arithmetic space is one of the optimal candidates to consider a generalization of their work,
since it has a lattice $\ZZ_S^n$ which is similar to the integral lattice $\ZZ^n$ in $\RR^n$. 
Borel and Prasad \cite{BP} extend Margulis' theorem to the $S$-arithmetic version and 
{Han, Lim and Mallahi-Karai} \cite{HLM} proved the quantitative version of $S$-arithmetic Oppenheim conjecture. 
See also \cite{GT} and \cite{KT} for flows on $S$-arithmetic symmetric spaces.
Ratner \cite{Ra} generalized her measure rigidity of unipotent subgroups in the real case to the cartesian product of real and $p$-adic spaces.

\subsection*{Main statements} Consider a finite set $S_f=\{p_1, \ldots, p_s\}$  of odd primes and let $S=\{\infty\}\cup S_f$. 
For each $p \in S$, denote by $\QQ_p$ the completion field of $\QQ$ with respect to the $p$-adic norm. If $p=\infty$, the norm $\|\cdot\|_\infty$ is the usual Euclidean norm and $\QQ_\infty=\RR$. Define $\QQ_S=\prod_{p\in S} \QQ_p$. 
The diagonal embedding of 
\[\begin{split}
\ZZ_S
&=\{mp_1^{n_1}\cdots p_s^{n_s} : m, n_1, \ldots, n_s \in \ZZ\}
\end{split}\]
is a uniform lattice subgroup in an additive group $\QQ_S$. 
We will define \emph{an $S$-lattice} $\Delta$ in $\QQ_S^n$ as a free $\ZZ_S$-module of rank $n$. 
An \emph{$S$-quadratic form} $\q$ is a collection of quadratic forms $q_p$ defined over $\QQ_p$, $p\in S$. 
We call $\q$ \emph{nondegenerate or isotropic} if every $q_p$ in $\q$ is nondegenerate or isotropic, respectively.
We say that $\q$ is \emph{rational} if there is a single rational quadratic form $q$ such that $\q=(\lambda_p q)_{p \in S}$,
where $\lambda_p \in \QQ_p-\{0\}$ and $\q$ is \emph{irrational} if it is not rational.

\vspace{0.08in}
\noindent \textbf{Notation.}
For a nonzero vector $\ov=(v_p)_{p\in S} \in \QQ_S^n$, the $p$-adic norm $\|\ov\|_p$ refers the $p$-adic norm $\|v_p\|_p$ of $p$-adic component $v_p \in \QQ_p^n$. We will use the following notation
\begin{equation*}\sigma(p)=\left\{\begin{array}{ll}
-1 & \text{if} \;p<\infty,\\
1 & \text{if} \;p=\infty.\\
\end{array}\right.
\end{equation*}

We use the notation $\ov/\|\ov\|_p^{\sigma}:=(v_p/|v_p|_p^\sigma)_{p\in S}$ for a vector consisting of unit vectors in each place $p\in S$.

For $p \in S$, we fix a star-shaped convex set $\Omega_p$ on $\QQ_p^n$ centered at the origin defined as
\[\Omega_p=\left\{ \ov \in \QQ_p^n : \| \ov\|_p < \rho_p(\ov/\| \ov\|^\sigma_p ) \right\},
\]
where $\rho_p$ is a positive function on the set of unit vectors in $\QQ_p^n$ and let $\Omega=\prod_{p\in S} \Omega_p$. If $p<\infty$, we will assume that $\rho_p$ is $(\ZZ_p-p\ZZ_p)$-invariant: for any $u \in \ZZ_p-p\ZZ_p$ and for any unit vector $v_p \in \ZZ_p^n-p\ZZ_p^n$,
\begin{equation*}\rho_p(u v_p)=\rho(v_p).  
\end{equation*} 

Let $I_p\subset \QQ_p$ be a bounded convex set of the form $(a,b)$ if $p=\infty$ and $a+p^b\ZZ_p$ if $p<\infty$. We call $I_p$ a \emph{$p$-adic interval}.
Define \emph{an $S$-interval} $\I_S=\prod_{p \in S} I_p$.
Denote by $\CT=(T_p)_{p\in S}\in \RR_{\ge 0}\times \prod_{p\in S_f} (p^{\ZZ}\cup\{0\})$ the collection of radius parameters $T_p$, $p \in S$.  
Consider $\CT\Omega=\prod_{p\in S}T_p\Omega_p$ the dilation of $\Omega$ by $\CT$.\\

We are interested in the number $\NT(\CT)=\NT(\q, \Omega, \I_S)(\CT)$ of vectors $\ov$ in $\ZZ_S^n \cap \CT\Omega$ such that $\q(\ov)\in \I_S$. 
Let $\mu$ be the product of Haar measures $\mu_p$ on $\QQ_p$, $p \in S$. We assume that $\mu_\infty$ is the usual Lebesgue measure on $\RR$ and $\mu_p(\ZZ_p)=1$ when $p<\infty$.
Let $\VT(\CT)=\VT(\q, \Omega, \I_S)(\CT)$ be the volume of the set $\{\ov \in \CT\Omega : \q(\ov)\in \I_S\}$ with respect to $\mu^n$ on $\QQ_S^n$.

Recall that a quadratic form of rank $4$ is \emph{split} if it is equivalent to $x_1x_4-x_2x_3$.
In \cite{HLM}, 
when an isotropic irrational quadratic form $\q$ is of rank $n\ge 4$ and does not contain a split form,
then as $\CT \rightarrow \infty$, $\NT(\CT)$ is approximated to $\VT(\CT)$. 
Here, we say that $\CT \rightarrow \infty$ if $T_p \rightarrow \infty$ for all $p\in S$.
Moreover, it is possible to estimate $\VT(\CT)$ in terms of $\I_S$ and $\CT$. As a result, there is a constant $C(\q, \Omega)>0$ such that 
$$\lim_{\CT\rightarrow \infty}\NT(\CT)= C(\q, \Omega) \mu(\I_S) |\CT|^{n-2},$$ 
where $|\T|=\prod_{p\in S} T_p$.

When $\q$ is of rank $3$ or $\q$ is of rank $4$ and contains a split form, there exist
quadratic forms such that $\NT(\CT)$ fails to approximate $\VT(\CT)$.
For instance, there is an irrational quadratic form $\q$ of rank $3$ and a sequence $(\CT_j)$ for a given $\varepsilon>0$ so that $\NT(\CT_j)\ge |\CT_j|(\log|\CT_j|)^{1-\varepsilon}$ (see \cite[Theorem 2.2]{EMM} and \cite[Lemma 9.2]{HLM}).

Even in the low dimensional cases, one can expect that for generic isotropic quadratic forms, $\NT(\CT)$ approximates $\VT(\CT)$,
which is our main theorem.

Here, the term \emph{generic} is with respect to the following measure:
fix some quadratic form $\sq$. One can identify $\SO(\sq)\setminus \SL_n(\QQ_S)$ with
the space of quadratic forms of the same discriminant with $\sq$.
Under this identification, one can assign a natural $\SL_n(\QQ_S)$-invariant measure on the space of quadratic forms.

\begin{theorem}\label{main thm} For almost all isotropic quadratic forms $\q$ of rank 3 or 4, as $\CT \rightarrow \infty$,
\[ \NT(\q, \Omega, \I_S)(\CT) \sim \lambda_{\q, \Omega}\; \mu(\I_S) |\CT|^{n-2},
\]
where $n=\rk(\q)$ and $\lambda_{\q,\Omega}$ is a constant depending on a quadratic form $\q$ and a convex set $\Omega$.
\end{theorem}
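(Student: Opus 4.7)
The plan is to adapt the real-variable argument of Eskin--Margulis--Mozes for \cite[Theorem 2.4]{EMM} to the $S$-arithmetic setting, using the Siegel-transform machinery developed in \cite{HLM} as the principal technical tool. The overall scheme has four steps: realize $\NT$ as a Siegel transform; compute its mean over $\SO(\sq)\backslash \SL_n(\QQ_S)$ via an $S$-arithmetic mean-value formula; prove $L^2$-concentration around that mean; and pass from a suitable subsequence to the full limit $\CT \to \infty$ via a monotonicity sandwich.

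First, I would realize $\NT(\q,\Omega,\I_S)(\CT)$ as the Siegel transform $\widehat{\chi}_\CT(g)$ of the characteristic function $\chi_\CT$ of $\CT\Omega \cap \sq^{-1}(\I_S) \subset \QQ_S^n$, evaluated at the coset $g \in \SL_n(\ZZ_S)\backslash \SL_n(\QQ_S)$ carrying $\sq$ to $\q$. The $S$-arithmetic Siegel integral formula then yields
\[
\int_{\SL_n(\ZZ_S)\backslash \SL_n(\QQ_S)} \widehat{\chi}_\CT(g)\,dg \;=\; \mu^n\bigl\{\ow \in \CT\Omega : \sq(\ow)\in \I_S\bigr\} \;\sim\; \lambda_{\sq,\Omega}\,\mu(\I_S)\,|\CT|^{n-2},
\]
the last asymptotic following from the volume computation already established in \cite{HLM}, which is rank-independent. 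Pushing this identity down to $\SO(\sq)\backslash \SL_n(\QQ_S)$ produces the expected main term for the $\nu$-mean of $g \mapsto \NT(\sq\circ g,\Omega,\I_S)(\CT)$.

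The crux of the argument is a variance estimate on the orbit space $\SO(\sq)\backslash \SL_n(\QQ_S)$. Expanding $\widehat{\chi}_\CT^2$ and grouping pairs of nonzero $\ZZ_S$-vectors $(\ov_1,\ov_2)$ by the rank of their $\QQ_S$-span, the diagonal contribution already produces $\VT(\CT)^2(1+o(1))$, while the off-diagonal terms involve pairs spanning a $2$-plane on which $\sq$ restricts to a binary form with both values in $\I_S$. Bounding this contribution requires adapting the $\alpha$-function and near-null-vector estimates of \cite[\S 3--5]{EMM} to each local place $\QQ_p$ and assembling them through the product structure of $\QQ_S$. This is the \emph{main obstacle}: the Siegel transform is not square-integrable on the ambient space $\SL_n(\ZZ_S)\backslash \SL_n(\QQ_S)$ when $n=3,4$, so integrability is regained only after restricting to the $\SO(\sq)$-orbit, and one must simultaneously control the Archimedean and each $p$-adic contribution to the set of lattices that are ``quasi-$\sq$-null'' in the sense that $\sq$ sends many of their primitive vectors near $\I_S$.

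Granted the variance bound, Chebyshev's inequality combined with Borel--Cantelli along a geometrically growing subsequence $\CT_j$ (chosen so that each coordinate $T_{j,p}$ advances through admissible values in $\RR_{\ge 0}$ or $p^\ZZ\cup\{0\}$ with $|\CT_j|$ growing exponentially) gives $\NT(\CT_j)\sim \VT(\CT_j)$ for $\nu$-almost every $g$. Extension to general $\CT \to \infty$ is by a sandwich argument: monotonicity of both $\NT$ and $\VT$ under slight dilations of $\Omega$ and thickenings or shrinkings of $\I_S$ lets us trap $\NT(\CT)$ between values at adjacent $\CT_j$ differing by a factor $1+o(1)$, and inserting the volume asymptotic $\VT(\CT) \sim \lambda_{\q,\Omega}\,\mu(\I_S)\,|\CT|^{n-2}$ from \cite{HLM} then yields Theorem \ref{main thm}.
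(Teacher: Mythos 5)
Your route is genuinely different from the paper's: you propose the Athreya--Margulis/Ghosh--Kelmer strategy \cite{AM,GK} -- first moment via a Siegel-type integral formula, a variance (second-moment) bound, Chebyshev plus Borel--Cantelli along a subsequence, then a monotonicity sandwich -- whereas the paper never computes a variance of $\NT$ at all. Instead it proves an almost-everywhere equidistribution of translated $\SK$-orbits (Proposition \ref{prop 3.13}), obtained by combining decay of matrix coefficients (Theorem \ref{matrix coeff}) with the geometric estimate for $\left|\{\sk\in\SK : \sum_p d_p(\sa^{}_\Ct\sk\sa^{-1}_\Ct,1)\le r\}\right|$ coming from the hyperbolic-space/tree analysis (Lemma \ref{lemma 3.12}), and then converts the counting problem into $\SK$-averages of Siegel transforms through the functions $J_f$ (Proposition \ref{prop 3.7}), with Corollary \ref{upper bound} handling the shells in which some coordinate of $\CT$ stays bounded. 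Your scheme could in principle deliver the theorem (it does in the real case), but as written it has a genuine gap precisely at its crux.

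Concretely: (i) the variance estimate is asserted, not proved, and the tools you invoke for it are the wrong ones -- the $\alpha$-function and ``near-null-vector'' estimates of \cite{EMM} give integrability and sup bounds for $\alpha^{}_S$, not a second-moment identity for the Siegel transform; what your off-diagonal analysis actually requires is an $S$-arithmetic Rogers-type formula (equivalently, an orbit decomposition of pairs of primitive $\ZZ_S$-vectors under $\SL_n(\ZZ_S)$ with the attendant integrals), which appears in neither \cite{EMM} nor \cite{HLM} and is a substantial piece of work that your proposal leaves entirely open. (ii) Your structural explanation of the difficulty is incorrect: since $\alpha^{}_S\in\mathcal L^s$ for all $1\le s<n$ (Proposition \ref{lemma 3.10}), the Siegel transform of a fixed bounded compactly supported function \emph{is} square-integrable on $\SL_n(\QQ_S)/\SL_n(\ZZ_S)$ for $n=3,4$; moreover restricting to the $\SO(\sq)$-orbit does not ``regain'' integrability -- the orbit space $\SO(\sq)\backslash\SL_n(\QQ_S)$ carries an infinite invariant measure, so the Siegel mean cannot simply be ``pushed down'' to it, and one must instead work on the finite-volume space and transfer almost-everywhere statements through the fibration. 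Relatedly, the Siegel transform of $\chi_{\CT\Omega\cap\sq^{-1}(\I_S)}$ evaluated at $g$ counts points of $\ZZ_S^n\cap g^{-1}(\CT\Omega)$ with $\sq(g\ov)\in\I_S$, not points of $\ZZ_S^n\cap\CT\Omega$, so the region is distorted by $g$ and this must be repaired (e.g.\ by a compactness/covering argument as in \cite{AM}) before the first-moment asymptotic can be quoted. (iii) The final sandwich fails at the finite places: adjacent admissible values of $T_p$ differ by a factor of $p$, never $1+o(1)$, so you cannot trap $\NT(\CT)$ between nearby subsequence values there; the fix is to run Borel--Cantelli over the full multi-parameter grid $\RR_{>0}\times\prod_p p^{\ZZ}$ (summable since the variance-to-mean-squared ratio decays like $|\CT|^{-(n-2)}$ and $n\ge3$) and reserve the dilation argument for the archimedean coordinate alone.
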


\textcolor{black}{For real quadratic forms of signature $(2,2)$, Eskin, Margulis and Mozes describe quadratic forms such that $\NT_{a,b,\Omega}(T)$ does not approximate to $\VT_{a,b,\Omega}(T)$ (\cite{EMM2}).}

On the generic quadratic forms and their Oppenheim conjecture-type problems, there is a work of Bourgain \cite{Bour} for real diagonal quadratic forms using analytic number theoretical method. 
Ghosh and Kelmer \cite{GK} showed another version of quantitative version of Oppenheim problem for real generic ternary quadratic forms and Ghosh, Gorodnik and Nevo \cite{GGN} extended this result for more general setting, such as for generic characteristic polynomial maps.
Recently, Athreya and Margulis \cite{AM} provided the bound of error terms $\NT_{(a,b),\Omega}(T)-\VT_{(a,b),\Omega}(T)$ for almost every real quadratic forms of rank at least 3.

For a pair of a quadratic form $q$ and a linear form $L$ of $\RR^n$, Gorodnik \cite{Gor} showed the density of $\{(q(\ov),L(\ov)) : \ov \in \mathcal P(\ZZ^n)\}\subseteq \RR^2$ under the certain assumption and Lazar \cite{Laz} generalized his result for the $S$-arithmetic setting. Sargent showed the density of $M(\{\ov \in \ZZ^n : q(\ov)=a\})\subseteq \RR$, where $q$ is a rational quadratic form and $M$ is a linear map (\cite{Sar1}). In \cite{Sar2}, he showed the quantitative version of his result.

In Section 2, we briefly review the symmetric space of the real and $p$-adic Lie groups.
In Section 3, we introduce an $S$-arithmetic alpha function, defined on $\SL_n(\QQ_S)/\SL_n(\ZZ_S)$ and equidistribution properties. 
We prove the main theorem in Section 4.

\subsection*{Acknowledgments} 
I would like to thank Seonhee Lim and Keivan Mallahi-Karai for suggesting this problem and providing valuable advices. 
This paper is supported by the Samsung Science and Technology Foundation under project No. SSTF-BA1601-03.

\section{Symmetric spaces}
Let us denote by $\SG$ an $S$-arithmetic group of the form $\SG=\prod_{p\in S} G_p$, where $G_p$ is a semisimple Lie group defined over $\QQ_p$, $p\in S$. 
An element of $\SG$ is $\sg=(g_p)_{p \in S}=(g_\infty, g_1, \ldots, g_s)$, where $g_i \in G_{p_i}$.
In most of cases, $\SG$ will be $\SL_n(\QQ_S):=\prod_{p \in S} \SL_n(\QQ_p)$, $n\ge 3$. 
Consider the lattice subgroup $\Gamma=\SL_n(\ZZ_S)$ of $\SG$ and the symmetric space $\SG/\Gamma$, which can be embedded in the space of unimodular $S$-lattices in $\QQ_S^n$.\\

The notation $\sq$ refers to a \emph{standard quadratic form}, which is the collection of quadratic forms such that
\[\begin{split}
&\left\{\begin{array}{l}
q_\infty(x_1,x_2,x_3)=2x_1x_3+x_2^2,\\
q_p(x_1,x_2,x_3)=2x_1x_3+a_1x_2^2,\;p\in S_f,\\
\end{array}\right. \hspace{0.865in}\text{if}\; \rk(\sq)=3,\\
&\left\{\begin{array}{l}
q_\infty(x_1,x_2,x_3,x_4)=2x_1x_4+a_1x_2^2+a_2x_3^2,\\ q_p(x_1,x_2,x_3,x_4)=2x_1x_4+a_1x_2^2+a_2x_3^2,\;p\in S_f,
\end{array}\right. \quad\text{if}\; \rk(\sq)=4,\\
\end{split}\] 
where $a_1,a_2 \in\{\pm1\}$ if $p=\infty$ and $a_1,a_2 \in\{\pm1,\pm u_0, \pm p, \pm pu_0\}$ if $p<\infty$. 
Here $u_0$ is some fixed square-free integer in $\QQ_p$ (See \cite[Section 4.2]{Se}).

In $\SO(\sq)$,  we fix a maximal compact subgroup $\SK$ of $\SO(\sq)$
\[\SK=K_\infty \times \prod_{p\in S_f} K_p=\left(\SO(q^0_\infty)\cap \SO(n)\right) \times \prod_{p\in S_f} 
\left(\SO(q^0_p)\cap \SL_n(\ZZ_p)\right)\]

and a diagonal subgroup 
\begin{equation}\label{diagonal group A}
\SA=\left\{\sa_{\Ct}=\prod_{p\in S} a_{t_{p_i}} : \Ct=(t_\infty, t_1, \ldots, t_s) \in \RR_{\ge0}\times p_1^{\ZZ}\times\cdots\times p_s^{\ZZ}\right\},\end{equation}
where
$a_{t_\infty}=\diag(e^{-t_\infty}, 1, \ldots, 1, e^{t_\infty})$ and $a_{t_p}=\diag(p^{t_p},1, \ldots, p^{-t_p})$, $p\in S_f$.
These groups $\SK$ and $\SA$ will be heavily used throughout the paper.  

We assume that $\CT$ and $\Ct$ have the relation
$$T_\infty=e^{t_\infty}\quad \text{and}\quad T_p=p^{t_p},\;\forall p\in S_f,$$
unless otherwise specified.
Also we briefly denote by $T_i$ or $t_i$ instead of $T_{p_i}$ or $t_{p_i}$ respectively.

In this section, we take $G_\infty=\SL_n(\RR)$ and $G_p=\GL_n(\QQ_p)$, $p<\infty$, where $n=3$ or $4$. Corresponding maximal compact subgroups are $\hat K_\infty=\SO(n)$ and $\hat K_p=\GL_n(\ZZ_p),$ respectively.
 
\subsection{The 3-dimensional hyperbolic space $\HH^3$}
Let $q^0_\infty$ be a standard isotropic quadratic form of rank $n$, $n=3,4$.
The special orthogonal subgroup $H_\infty=\SO(q^0_\infty)$ is one of the well-known linear groups $\SO(2,1)$, $\SO(3,1)$ and $\SO(2,2)$.
Let us examine the symmetric space of $H_\infty$ quotiented by $K_\infty$ and define a metric invariant by right multiplication.

\vspace{0.08in}
\noindent \textbf{Case i) $H_\infty=\SO(3,1)$:} Set $\HH^3$ to be the 3-dimensional hyperbolic space $\HH^3=\{z+ti : z=x+yj \in \CC\;\text{and}\; t \in \RR_{>0}\;\}$. 
The group $\SO(3,1)$ is locally isomorphic to $\PSL_2(\CC)$, 
which is the group $\mathrm{Isom}^+(\HH^3)$ of orientation-preserving isometries of $\HH^3$.
Since the stabilizer of the point $i$ in $\PSL_2(\CC)$ is the maximal compact subgroup $\PSU(2)$, 
we may identify the symmetric space $K_\infty\setminus \SO(3,1)$, 
which is isomorphic to $\PSU(2)\setminus \PSL_2(\CC)$, with the hyperbolic space $\HH^3$. 
Let $\p:\SO(3,1)\rightarrow K_\infty \setminus \SO(3,1)\simeq \HH^3$ be the projection given by $\p(g)=g.i.$
Define the metric $d_\infty$ of $K_\infty\setminus \SO(3,1)$ by 
\[d_\infty(g_1, g_2)=d_\infty(K_\infty g_1, K_\infty g_2):=d_{\HH^3}(\p(g_1), \p(g_2)).
\]

\vspace{0.08in}
\noindent \textbf{Case ii) $H_\infty=\SO(2,1)$:}
Consider the isomorphism $K_{\infty}\setminus \SO(2,1)\simeq\PSO(2)\setminus \PSL_2(\RR)\simeq \HH^2=\{ x+ti : x \in \RR\;\text{and}\; t \in \RR_{>0}\:\}$. We will use the notation $\p$ for the projection $\SO(2,1)\rightarrow \HH^2$ as well. We define the metric $d_\infty$ of $K_\infty\setminus \SO(2,1)$ by
$$d_\infty(g_1, g_2):=d_{\HH^2}(\p(g_1), \p(g_2)).$$

\vspace{0.08in}
\noindent \textbf{Case iii)$H_\infty=\SO(2,2)$:}
In the case of $H_\infty=\SO(2,2)$, consider the isomorphism between real vector spaces $\RR^4$ and $\mathcal M_{2}(\RR)$ given by
\[(x,y,z,w) \mapsto \left(\begin{array}{cc} x & y \\ z & w \end{array}\right).
\]

The split form $xw-yz$ of $\RR^4$ corresponds to the determinant of $\left(\begin{array}{cc} x & y \\ z & w \end{array}\right)$ in $\mathcal M_2(\RR)$.
Moreover, there is the local isomorphism from $\SL_2(\RR)\times \SL_2(\RR)$ to $\SO(2,2)$, which is induced by the action
\begin{equation}\label{local isom}
(g_1, g_2).\left(\begin{array}{cc} x & y \\ z & w \end{array}\right)=g^{}_1\left(\begin{array}{cc} x & y \\ z & w \end{array}\right)g^t_2.
\end{equation}

Hence we deduce that 
\[K_{\infty}\setminus \SO(2,2) \simeq_{loc} (\SO(2)\times \SO(2))\setminus (\SL_2(\RR)\times \SL_2(\RR)) \simeq \HH^2_1 \times \HH^2_2,
\]
where $\HH^2_1\simeq\HH^2\simeq \HH^2_2$. Note also that the action of $a_t$ splits into the action of $(b_t, b_t)$, where $b_t=\diag(e^{t/2}, e^{-t/2})$.
Put $$d_\infty=d_{\SO(2,2)}:=\max\{d_{\HH^2_1}, d_{\HH^2_2}\}.$$ 


\begin{lemma}\label{lemma 3.12 real} Let $H_\infty$ be one of $\SO(2,1)$, $\SO(3,1)$ or $\SO(2,2)$. 
Let $K_\infty$ be a maximal compact subgroup of $H_\infty$.  Then there exists a constant $C_\infty >0$ such that for any $t>0$ and for any $r \in (0, 2t)$,
\begin{equation}\label{eq lemma 3.12 real}
\left|\left\{k \in K_\infty : d_\infty(a^{}_t k a^{-1}_t, 1) \le r \right\}\right|<C_\infty e^{-2t+r},
\end{equation}
where $|\cdot|$ is the normalized Haar measure on $K_\infty$.
\end{lemma}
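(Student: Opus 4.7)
My plan is geometric: reduce the Haar-measure count of $k \in K_\infty$ with $d_\infty(a_t k a_t^{-1}, 1) \le r$ to an area estimate for a geodesic cap on a metric sphere inside the (product of) real hyperbolic space(s) identified with $K_\infty \setminus H_\infty$ via the projection $\p$.

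The key observation is the identity, with $i := \p(1)$ denoting the basepoint and $i' := a_t^{-1}.i$,
\[
a_t k a_t^{-1}.i \;=\; a_t.(k.i'), \qquad k \in K_\infty.
\]
Since $K_\infty$ stabilizes $i$, the orbit $\{k.i' : k \in K_\infty\}$ is precisely the metric sphere $S(i,t)$; applying the isometry $a_t$, the image of the map $k \mapsto a_t k a_t^{-1}.i$ becomes the sphere $S(a_t.i, t)$, and $i$ itself lies on this sphere because $d_\infty(a_t.i, i) = t$. Hence the condition $d_\infty(a_t k a_t^{-1}, 1) \le r$ is equivalent to $k.i' \in B(i', r)$; i.e., $k.i'$ lies in the geodesic cap $S(i,t) \cap B(i',r)$.

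For the rank-one cases $H_\infty = \SO(2,1)$ or $\SO(3,1)$, the angular radius $\phi_0$ of this cap measured from $a_t.i$ is obtained from the hyperbolic law of cosines applied to the triangle with vertices $i$, $a_t.i$, and $p \in S(a_t.i, t)$:
\[
\cosh d_\infty(i, p) \;=\; \cosh^2 t - \sinh^2 t\,\cos\phi,
\]
so setting $d_\infty(i, p) = r$ yields $\sin(\phi_0/2) = \sinh(r/2)/\sinh(t)$, which is well-defined exactly under the hypothesis $r < 2t$. The orbit map $k \mapsto k.i'$ is a fibration $K_\infty \to S(i,t)$ with compact isotropy, so normalized Haar measure on $K_\infty$ pushes forward to normalized spherical area on $S(i,t)$; the measure in question therefore equals the relative area of the cap. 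A direct computation of this area in spherical coordinates---using that geodesic spheres in $\HH^d$ scale metric areas by $\sinh^{d-1}(t)$---combined with $\sinh(r/2) \le \tfrac12 e^{r/2}$ and $\sinh(t) \gtrsim e^t$ for $t$ bounded below, produces the bound $C_\infty e^{-2t + r}$.

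For $H_\infty = \SO(2,2)$, I use the product splitting $K_\infty \setminus H_\infty \cong \HH^2_1 \times \HH^2_2$ in which $K_\infty = \SO(2) \times \SO(2)$ and $a_t$ acts diagonally as $(b_t, b_t)$. Since $d_\infty$ is the maximum of the two factor distances, the condition $d_\infty(a_t k a_t^{-1}, 1) \le r$ decouples into a pair of independent rank-one conditions in the two $\HH^2$-factors, and the product of the two independent estimates yields the same bound $C_\infty e^{-2t + r}$. The main subtlety I expect is in the cap-area computation itself: the correct dimensional power of $\phi_0$ (and not merely the dimension of the $K_\infty$-orbit of $i'$) must be extracted so that the resulting exponent aligns with the claimed $-2t + r$; exploiting the full geometry of the spherical cap, rather than only the differential at $k = 1$, is the delicate point.
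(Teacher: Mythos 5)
Your reduction is the same one the paper uses: transport the condition $d_\infty(a_tka_t^{-1},1)\le r$ to a point on a metric sphere through the orbit of the basepoint, estimate the angular size of the resulting cap with the hyperbolic law of cosines, push the normalized Haar measure of $K_\infty$ forward to the rotation-invariant measure of the sphere, and treat $\SO(2,2)$ via the splitting $\HH^2_1\times\HH^2_2$ with $a_t\mapsto(b_t,b_t)$. So there is no difference of method; the issue is that the decisive last step, the cap-area computation, is only asserted, and it is exactly there that your argument has a gap.

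Concretely: you normalize so that $d_\infty(a_t.i,i)=t$, hence the orbit is the sphere $S(i,t)$ and your formula gives angular radius $\sin(\phi_0/2)=\sinh(r/2)/\sinh(t)\asymp e^{r/2-t}$. For $\SO(3,1)$ the cap is two-dimensional and its normalized area is $\asymp\sin^2(\phi_0/2)\asymp e^{-2t+r}$, and for $\SO(2,2)$ the product of two one-dimensional caps gives the same; but for $\SO(2,1)$ the cap is an arc of a circle, so its normalized $\SO(2)$-measure is $\asymp\phi_0\asymp e^{-t+r/2}$, and the inequality $e^{-t+r/2}\le C_\infty e^{-2t+r}$ is false uniformly in $0<r<2t$ (fix $r$ and let $t\to\infty$). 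This is precisely the ``delicate point'' you mention and leave unresolved: the dimensional power of $\phi_0$ in the rank-one planar case does not align with the claimed exponent under your normalization. The paper's proof does not meet this obstruction because it takes the displacement of $a_t$ to be $2t$ rather than $t$ (it writes $x=e^{2t}i$, works on the sphere $S_{2t}$, and uses $\sinh^2(2t)$ in the law of cosines, i.e.\ it identifies $a_t$ with $\diag(e^t,e^{-t})$ in $\SL_2$), so that even the one-dimensional cap has measure $\asymp e^{-2t+r/2}\le e^{-2t+r}$, and it then deduces $\SO(2,1)$ from the $\SO(3,1)$ picture by the compatibility of the embeddings $\SO(2,1)\hookrightarrow\SO(3,1)$, $\HH^2\hookrightarrow\HH^3$. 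To complete your proof you must therefore pin down the translation length of $a_t=\diag(e^{-t},1,\dots,1,e^t)$ under the identification defining $d_\infty$ and make the $\SO(2,1)$ case consistent with it: with your value $t$ this method only yields $e^{-t+r/2}$ for $\SO(2,1)$ and the stated bound is not obtained; with the paper's value $2t$ your sphere radius and law-of-cosines formula have to be changed accordingly, after which all three cases do fall within $C_\infty e^{-2t+r}$. (The restriction ``$t$ bounded below'' in your $\sinh(t)\gtrsim e^t$ step is harmless, since for $t$ in a bounded range the left-hand side is at most $1$ and can be absorbed into $C_\infty$.)
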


\begin{proof}

Assume that $H_\infty=\SO(3,1)\cong \SL_2(\CC)$ and $K_\infty\cong \SU(2)$.
For each $t >0$, $K_\infty$ acts transitively on the hyperbolic sphere $S_{2t}\subset \HH^3$ of radius $2t$ centered at $i$.

 Since $d_\infty$ is $H_\infty$-invariant,
\[\begin{split}
\left|\left\{k \in K_\infty : d_{\infty}(a^{}_t k a^{-1}_t, 1) \le r \right\}\right|
&=\left|\left\{k \in K_\infty : d_{\infty}(a^{}_t k , a^{}_t) \le r \right\}\right|\\
&=\left|\left\{y \in S_{2t} : d_{\HH^3}(y, e^{2t}i) \le r \right\}\right|.
\end{split}\]

The $K_\infty$-invariant measure of $S_{2t}$ is identified with the normalized Lebesgue measure of the unit sphere $\SS^2$ which is isomorphic to $\partial \HH^3$. 

Let $x=e^{2t}i$ and let $\theta$ be the angle between two geodesics from $i$ to $x$ and from $i$ to $y$ (Figure 1).
By the hyperbolic law of cosines
\[\cosh(d(x,y))=1+\sinh^2 (2t)\cdot (1-\cos\theta)=1+2\sinh^2(2t) \sin^2(\frac \theta 2),
\]
and since $d(x,y)\le r$,
we obtain that $\theta$ is bounded by $e^{-2t+r/2}$. Hence
\[\left|\left\{k \in K_\infty : d_{\infty}(a_t k a^{-1}_t, 1)\le r\right\}\right|\le C_\infty e^{2(-2t+r/2)}
\]
for some constant $C_\infty>0$.

The case of $H_\infty=\SO(2,1)\cong \SL_2(\RR)$ follows immediately from the compatibility between two embeddings $\SO(2,1)\hookrightarrow \SO(3,1)$ and $\HH^2 \hookrightarrow \HH^3$ with the projection $\p$ (see also \cite[Lemma 3.12]{EMM}).

If $H_\infty=\SO(2,2)$, using the local isometry \eqref{local isom}, $K_\infty\cong \SO(2)\times\SO(2)$ acts transitively on the product $S_{t}\times S_{t}$ of two hyperbolic spheres in $\HH^2\times \HH^2$. Thus we have
\[\begin{split}&\left|\left\{k \in K_\infty : d_\infty(a^{}_t k a^{-1}_t, 1) \le r \right\}\right|\\
&\hspace{0.3in}=\left|\{k_1\in \SO(2) : d_{\HH}(b_t k_1, b_t)\le r\}\times\{k_2\in \SO(2) : d_{\HH}(b_t k_2, b_t)\le r\}\right|\\
&\hspace{0.3in}< C \left(e^{-t+r/2}\right)^2=Ce^{-2t+r}.
\end{split}\]
\end{proof}
\begin{figure}[h!]
  \begin{center}
    \begin{tikzpicture}
      \fill[color=lightgray] (-4.3,4.8) rectangle (4.3,0);
      \filldraw[fill=gray, draw=black] (-0.9284,4.1919) arc (180:360:0.9284 and 0.1);
      \filldraw[fill=gray] (0.9284,4.1919) arc (65:115:2.1865 and 2.1865);
      \draw [->,>=stealth] (-4.7,0) -- (4.7,0);
      \draw [->,>=stealth] (0,0) -- (0,5.1);
      \draw [->,.=stealth] (1.6,1.35) -- (-1.6,-1.35);
      \node at (3.9,4.4) {$\HH^2$};
      \node at (4.7,4.9) {$\HH^3$};
      
      \fill[fill=black] (0,0.4) circle (1.4pt);
      \node at (-0.2,0.4) {$i$};
      
      \draw (0,2.2228) circle (2.1865);
      
      \draw (-2.1865,2.2228) arc (180:360:2.1865 and 0.6);
      \draw[dashed] (2.1865,2.2228) arc (0:180:2.1865 and 0.6);
      
      \fill[fill=black] (0,4.4093) circle(1.4pt);
      \node at (0.23, 4.55) {$x$};
      
      \fill[fill=black] (0.9284,4.1919) circle(1.4pt);  
      \node at (0.99, 4.4329) {$y$};

      \draw (0,0.4) arc (178:155:9.8497 and 9.8497);
      \draw (0,0.4) arc (2:180-155:9.8497 and 9.8497);
      
      \draw (-0.9284,4.1919) arc (180:360:0.9284 and 0.1);

      \draw[dashed] (0.9284,4.1919) arc (0:180:0.9284 and 0.1);
      
      \draw (0,2) arc (90:70:0.5 and 0.5);
      \node at (0.15,2.3) {$\theta$};

    \end{tikzpicture}
    \caption{The 3-dimensional hyperbolic space $\HH^3$. The measure of the set in \eqref{eq lemma 3.12 real} is equal to the Lebesgue measure of the grey area on the top of the sphere.}
  \end{center}
\end{figure}

\subsection{A tree in the building $\Bding_n$} For a prime $p$ and $n\ge 2$, let us briefly recall 
the Euclidean building $\Bding_n$, whose vertex set $\mathcal V(\Bding_n)$ is isomorphic to $\GL_n(\ZZ_p)\setminus \GL_n(\QQ_p)$. Recall the Cartan decomposition (\cite[Theorem 3.14]{PR}) 
$$\GL_n(\QQ_p)=\GL_n(\ZZ_p)\cdot\hat A\cdot \GL_n(\ZZ_p)=\GL_n(\ZZ_p)\cdot \hat A^+ \cdot\GL_n(\ZZ_p),$$ where
\[\begin{split}
\hat A&=\{\diag(p^{m_1}, p^{m_2}, \ldots, p^{m_n}) : m_1, m_2, \ldots, m_n \in \ZZ\},\\
\hat A^+&=\{\diag(p^{m_1}, p^{m_2}, \ldots, p^{m_n}) : 0\le m_1\le m_2\le\cdots\le m_n\}\subseteq \hat A.
\end{split}\]

Consider the space of free $\ZZ_p$-modules $L$ of rank $n$ in $\QQ_p^n$
on which $\GL_n(\QQ_p)$ acts by
\[g.(\ZZ_p \ov_1\oplus\cdots\oplus \ZZ_p\ov_n)=\ZZ_p(\ov_1g)\oplus \cdots \oplus \ZZ_p(\ov_ng), 
\]
where $g\in \GL_n(\QQ_p)$ and $\ov_1,\ldots,\ov_n \in \QQ_p^n$ are $\ZZ_p$-basis of $L$.
Two rank-$n$ free $\ZZ_p$-modules $L_1$ and $L_2$ are said to be \emph{equivalent} if $L_1=p^m L_2$ for some $m \in \ZZ$. 

The building $\Bding_n$ is an $(n-1)$-dimensional CW complex whose vertices are
equivalence classes of free $\ZZ_p$-modules $[L]$ of rank $n$.
Vertices $[L_0]$, $\ldots$, $[L_k]$ in $\Bding_n$ form a $k$-simplex
if there are representatives $L_0$, $\ldots$, $L_k$ in $\QQ_p^n$ such that
$$p L_0 \subsetneq L_1 \subsetneq \cdots \subsetneq L_k \subsetneq L_0.$$   

In particular, at each vertex $[L]$ in $\Bding_n$, the number of adjacent vertices with $[L]$ equals the number of proper nontrivial subspaces of the space $(\ZZ_p/p\ZZ_p)^n$.
Note that at most $(n-1)$ vertices form a simplex in $\Bding_n$:
there is an $\ZZ_p$-generating set $\{\ov_1, \ov_2, \ldots, \ov_n\}$ of $L$ such that
\[\begin{split}
&[pL]=[\langle p\ov_1, p\ov_2, \ldots, p\ov_n\rangle] \subsetneq [\langle\ov_1, p\ov_2, \ldots, p\ov_n\rangle] \subsetneq\\
&\hspace{0.2in} [\langle\ov_1, \ov_2, p\ov_3, \ldots, p\ov_n\rangle]\subsetneq [\langle\ov_1, \ldots, \ov_{n-1}, p\ov_n\rangle \subsetneq [\langle\ov_1, \ldots, \ov_n\rangle]=[L].
\end{split}\]

Vertices $[L]$ in $\Bding_n$ is also denoted by $n$ by $n$ matrices whose rows are $\ZZ_p$ generators of $L$. 
By right multiplication of $\GL_n(\ZZ_p)$,
they are represented by upper triangle matrices whose diagonal entries are in $p^{\ZZ_{\ge0}}$.

In $\Bding_n$, an apartment $\mathcal A$ is defined as follows. The vertex set $\mathcal V(\mathcal A)$ is 
\[\mathcal V(\mathcal A)=\{[ag]\in \GL_n(\ZZ_p)\setminus \GL_n(\QQ_p) : a \in \hat A\}
\]
for some $g\in\GL_n(\QQ_p)$. A $k$-simplicial complex is in $\mathcal A$ if its vertices are all contained in $\mathcal A$.
Denote by $\mathcal A_0$ the apartment whose vertex set is
\[
\{[\diag(p^{m_1}, \ldots, p^{m_n})] : m_i \in \ZZ_{\ge 0}\}.
\]
Note that $\mathcal A$ is isometric to $\RR^{n-1}$ and
$\mathcal V(\mathcal A_0)$ is a lattice in $\RR^{n-1}$ (See Figure 2),
which is the reason that $\Bding_n$ is called a Euclidean building.
There is the natural covering map $\p:\Bding_n\rightarrow \mathcal A_0$ given by
\[\p : \left[\left(\begin{array}{cccc}
p^{m_1} & v_{12} & \cdots & v_{1n}\\
& p^{m_2} & \cdots & v_{2n}\\
& & \ddots & \vdots\\
& & & p^{m_n}
\end{array}\right)\right]\mapsto [\diag(p^{m_1}, p^{m_2}, \ldots, p^{m_n})],
\]
where the matrix in the above map is a reduced type, i.e., $m_i \ge 0$ and $\gcd\{p^{m_j}, v_{ij}\}=1$, $1\le i<j\le n$.
  
We give a distance $d$ on $\mathcal V(\Bding_n)$ by $d([L_1],[L_2])=\ell$, where $\ell$ is the minimal number of edges connecting $[L_1]$ and $[L_2]$.  

Let $a:\ZZ \rightarrow \mathcal V(\mathcal A_0)$ be any geodesic in the vertex set of $\mathcal A_0$. Then the inverse image
\[\p^{-1}(\{a(m) : m \in \ZZ\})
\]
is a tree in $\Bding_n$. 
If the above set $\{a(m)\}$ is generated by one element, then $\p^{-1}(\{a(m)\})$ is a regular tree. 
More basic properties about buildings can be found in \cite{AB}, \cite{Robertson} for Euclidean buildings, and see also \cite{Setree} for Bruhat-Tits trees. 

\begin{figure}[h!]
  \begin{center}
    \begin{tikzpicture}
		\path[draw, gray] (-4*1.25,0) -- (4*1.25,0);  
		\path[draw, gray] (-4*1.25, 3^.5/2*1.25) -- (4*1.25, 3^.5/2*1.25);
		\path[draw, gray] (-4*1.25, 3^.5*1.25) -- (4*1.25, 3^.5*1.25);
		\path[draw, gray] (-4*1.25, -3^.5/2*1.25) -- (4*1.25, -3^.5/2*1.25);
		\path[draw, gray] (-4*1.25, -3^.5*1.25) -- (4*1.25, -3^.5*1.25);
		\path[draw, gray] (-4*1.25, 27^.5/2*1.25) -- (4*1.25, 27^.5/2*1.25);
		
		\path[draw, gray] (-1*1.25, -3^.5*1.25) -- (1.5*1.25, 27^.5/2*1.25);
		\path[draw, gray] (0*1.25, -3^.5*1.25) -- (2.5*1.25, 27^.5/2*1.25);
		\path[draw, gray] (1*1.25, -3^.5*1.25) -- (3.5*1.25, 27^.5/2*1.25);
		\path[draw, gray] (2*1.25, -3^.5*1.25) -- (4*1.25, 3^.5*1.25);
		\path[draw, gray] (-2*1.25, -3^.5*1.25) -- (0.5*1.25, 27^.5/2*1.25);
		\path[draw, gray] (-3*1.25, -3^.5*1.25) -- (-0.5*1.25, 27^.5/2*1.25);
		\path[draw, gray] (-4*1.25, -3^.5*1.25) -- (-1.5*1.25, 27^.5/2*1.25);
		
		\path[draw, gray] (-1.5*1.25, 27^.5/2*1.25) -- (1*1.25, -3^.5*1.25);
		\path[draw, gray] (-0.5*1.25, 27^.5/2*1.25) -- (2*1.25, -3^.5*1.25);
		\path[draw, gray] (0.5*1.25, 27^.5/2*1.25) -- (3*1.25, -3^.5*1.25);
		\path[draw, gray] (1.5*1.25, 27^.5/2*1.25) -- (4*1.25, -3^.5*1.25);
		\path[draw, gray] (-2.5*1.25, 27^.5/2*1.25) -- (0*1.25, -3^.5*1.25);
		\path[draw, gray] (-3.5*1.25, 27^.5/2*1.25) -- (-1*1.25, -3^.5*1.25);
		\path[draw, gray] (-4*1.25, 3^.5*1.25) -- (-2*1.25, -3^.5*1.25);

		\path[draw, blue] (-4*1.25,-3^.5*1.25) -- (-3*1.25, -3^.5*1.25);
		\path[draw, blue] (-3*1.25,-3^.5*1.25) -- (-2.5*1.25, -3^.5/2*1.25);
		\path[draw, blue] (-2.5*1.25,-3^.5/2*1.25) -- (-1.5*1.25, -3^.5/2*1.25);
		\path[draw, blue] (-1.5*1.25, -3^.5/2*1.25) -- (-1*1.25, 0);
		\path[draw, blue] (-1*1.25,0) -- (0,0);
		\path[draw, blue] (0,0) -- (0.5*1.25, 3^.5/2*1.25);
		\path[draw, blue] (0.5*1.25, 3^.5/2*1.25) -- (1.5*1.25, 3^.5/2*1.25);
		\path[draw, blue] (1.5*1.25, 3^.5/2*1.25) -- (2*1.25, 3^.5*1.25);
		\path[draw, blue] (2*1.25, 3^.5*1.25) -- (3*1.25, 3^.5*1.25);
		\path[draw, blue] (3*1.25, 3^.5*1.25) -- (3.5*1.25, 27^.5/2*1.25);
		\path[draw, blue] (3.5*1.25, 27^.5/2*1.25)--(4*1.25, 27^.5/2*1.25);
		\fill[fill=blue] (0.5*1.25, 3^.5/2*1.25) circle (1.5pt);
		\fill[fill=blue] (2*1.25, 3^.5*1.25) circle (1.5pt);
		\fill[fill=blue] (3.5*1.25, 27^.5/2*1.25) circle (1.5pt);
		\fill[fill=blue] (-2.5*1.25, -3^.5/2*1.25) circle (1.5pt);
		\fill[fill=blue] (-4*1.25, -3^.5*1.25) circle (1.5pt);
		
		\fill[fill=black] (-1*1.25,0) circle (1.4pt);
		\node at (-1*1.25, -0.3*1.25) {\scriptsize$\mathrm{Id}_3$};
		\draw[-, line width=1.5pt] (-1*1.25,0) -- (0,0);
		\node at (0.7*1.25, -0.3*1.25) {\scriptsize$\diag(1,1,p)$};
		\node at (-0.5*1.25, 3^.5/2*1.25+0.2*1.25) {\scriptsize$\diag(1,p,p)$};
		\draw[->, line width=1.5pt] (0,0) -- (0.5*1.25, 3^.5/2*1.25);
		
		\node at (0.5*1.25+1.75, 3^.5/2*1.25+0.3) {\scriptsize$\diag(1,p,p^2)\simeq \diag(p^{-1},1,p)$};
				     
    \end{tikzpicture}
    \caption{Apartment $\mathcal A_0$ of $\Bding_3$. $K_p\setminus\SO(2x_1x_3-x_2^2)$ is embedded in the inverse image of the blue line.}
  \end{center}
\end{figure}

\begin{lemma}\label{lemma 3.12 p-adic} Let $H_p$ be a $\QQ_p$-rank one connected and simply connected semisimple algebraic subgroup of $\GL_n(\QQ_p)$.
Denote the Cartan decomposition of $H_p$ by $H_p=K_p\cdot \hat A\cdot K_p
$, where $K_p=H_p\cap \GL_n(\ZZ_p)$. 
Then the symmetric space $K_p \setminus H_p$ is a tree.
\end{lemma}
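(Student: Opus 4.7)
The plan is to realize $K_p\setminus H_p$ inside the Euclidean building $\Bding_n$ of $\GL_n(\QQ_p)$ as the preimage $\p^{-1}(L)$ of a single geodesic line $L$ in the apartment $\mathcal A_0$, and then to invoke the observation recorded just above---that such a preimage is a tree---to conclude. The embedding is the one illustrated in Figure~2, and the argument proceeds in three steps.

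First I would embed $K_p\setminus H_p$ into the vertex set of the building. Because $K_p=H_p\cap\GL_n(\ZZ_p)$, the assignment
\[
\iota\colon K_p\setminus H_p \longrightarrow \GL_n(\ZZ_p)\setminus\GL_n(\QQ_p)=\mathcal V(\Bding_n), \qquad K_p h \mapsto \GL_n(\ZZ_p)h,
\]
is well defined and injective, and is equivariant for the right action of $H_p$; its image is the $H_p$-orbit of the base vertex $[\Id_n]$.

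Next I would identify that orbit with $\p^{-1}(L)$ for an apartment line $L$. Since $H_p$ has $\QQ_p$-rank one, its maximal $\QQ_p$-split torus $A$ is one-dimensional; after conjugating by $\GL_n(\ZZ_p)$ I may assume $A\subseteq\hat A$, so that $A\cdot[\Id_n]$ projects under $\p$ onto a geodesic line $L$ in $\mathcal A_0$. Combining this with the Cartan decomposition $H_p = K_p\hat A K_p$ yields the inclusion $\iota(K_p\setminus H_p)\subseteq\p^{-1}(L)$: every coset $K_p h$ admits a representative of the form $ak$ with $a\in A$ and $k\in K_p\subseteq\GL_n(\ZZ_p)$, which forces $\p\circ\iota(K_p h)\in L$. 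The reverse inclusion, which is the heart of the argument, follows from Bruhat--Tits theory for simply connected rank-one semisimple $p$-adic groups: the Bruhat--Tits building of $H_p$ is itself a tree, realized inside $\Bding_n$ as $H_p\cdot[\Id_n]$, and this orbit exhausts $\p^{-1}(L)$ because $A$ is transitive along the vertices of $L$ while suitable unipotent root subgroups of $H_p$ act transitively on the fibers of $\p$ over each vertex. With both inclusions in hand, $\iota$ becomes a bijection onto $\p^{-1}(L)$, and the tree structure of $\p^{-1}(L)$ from the preceding paragraph transfers back to $K_p\setminus H_p$.

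The main obstacle is precisely the surjectivity of the $H_p$-orbit onto $\p^{-1}(L)$. The subtle point is that the valence of the resulting tree depends on whether the residual quadratic form is anisotropic or isotropic, and $H_p$ may have two conjugacy classes of hyperspecial parahoric subgroups, producing a bipartite structure on the vertices that must be accounted for when checking that no vertex of $\p^{-1}(L)$ is missed. For the concrete standard forms $q_p^0$ of rank $3$ or $4$ used in this paper, this transitivity can alternatively be verified by hand by exhibiting the explicit unipotent one-parameter subgroups of $\SO(q_p^0)$ that realize each edge of $\p^{-1}(L)$.
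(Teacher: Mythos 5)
Your first two steps coincide with the paper's own proof: one embeds $K_p\setminus H_p$ into $\mathcal V(\Bding_n)=\GL_n(\ZZ_p)\setminus\GL_n(\QQ_p)$ via $K_ph\mapsto\GL_n(\ZZ_p)h$, uses the rank-one hypothesis to place the split torus inside $\hat A$ after conjugation, and then the Cartan decomposition $H_p=K_p\hat A K_p$ puts the image inside $\p^{-1}(L)$ for a geodesic line $L$ in $\mathcal A_0$, which is a tree by the preceding discussion. That containment is all the paper proves and all that is needed later: the paper itself only ever uses that $K_p\setminus\SO(q_p)$ is a \emph{subtree} of $\Bding_n$ (respectively of the $(p+1)$-regular tree), and the counting in the corollary relies only on lower bounds for sphere sizes, never on identifying the quotient with the full preimage.

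Your third step, which you present as the heart of the argument, is where the proposal goes wrong: the orbit $H_p\cdot[\Id_n]$ is in general a \emph{proper} subtree of $\p^{-1}(L)$, so the claimed equality $\iota(K_p\setminus H_p)=\p^{-1}(L)$ fails. Concretely, for $n=3$ and $H_p=\SO(2x_1x_3+x_2^2)$ the quotient $K_p\setminus H_p$ has valence at most $p+1$, whereas the vertex $[\Id_3]$ already has $p^{2}$ neighbours in $\p^{-1}(L)$ lying over the single adjacent vertex $[\diag(1,1,p)]$ of $L$ (the classes of reduced matrices with diagonal $(1,1,p)$ and arbitrary entries $v_{13},v_{23}$ mod $p$), and as many again over the other adjacent vertex; no unipotent subgroup of $H_p$ acts transitively on such fibers. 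Likewise $\hat A\cap H_p$ need not be transitive on the vertices of $L$: its generator $\diag(p^{-1},1,p)$ advances two edges per step. The appeal to Bruhat--Tits theory does not repair this, because the Bruhat--Tits tree of $H_p$ is (isomorphic to) the orbit itself, not $\p^{-1}(L)$. The fix is simply to delete the surjectivity claim: the containment from your first two steps, together with the observation that a connected subcomplex of a tree is a tree, already yields the lemma, and this is exactly the paper's argument.
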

\begin{proof} 
Note that there is the natural embedding from $K_p\setminus H_p$ to $\GL_n(\ZZ_p)\setminus \GL_n(\QQ_p)$ given by $K_p g \mapsto \GL_n(\ZZ_p)g$, $g\in H_p$. For $g \in H_p$, we denote $\GL_n(\ZZ_p)g$ by $K_p g$ and $\{\GL_n(\ZZ_p)g : g \in H_p\}$ by $K_p\setminus H_p$.

Since $H_p$ is of $\QQ_p$-rank one, up to an appropriate conjugation, $K_p\hat A$ is generated by some element $a=\diag(p^{m_1},\ldots,p^{m_n})$ with $m_1\le\cdots\le m_n$ and $m_1\neq 0$. Hence we may assume that $K_p \hat A=\langle a\rangle$.
Choose a geodesic $\{a(m) : m\in \ZZ\}$ in $\mathcal V(\mathcal A_0)$ containing $K_p \hat A$.
Then $K_p\setminus H_p$ is a subset of a tree $\p^{-1}(\{a(m) : m\in \ZZ\})$. 
\end{proof}

Let us show that for a $4$-dimensional nondegenerate isotropic quadratic form $q_p$ on $\QQ^4_p$, if it is not a split form, then the special orthogonal group $\SO(q_p)$ is of $\QQ_p$-rank one, similar to the case of real. 
\begin{lemma}\label{p-adic (3,1)}
Let $q_p$ be an isotropic non-split quadratic form of rank $4$. If we denote $\SO(q_p)=K\cdot \hat A\cdot K$, where $K=\SO(q_p)\cap \SL_4(\ZZ_p)$, then $\hat A$ is isomorphic to the diagonal subgroup $\left\{\diag(p^{-t},1,1, p^{t}) :  t \in \ZZ_{\ge 0}\right\}$. 
\end{lemma}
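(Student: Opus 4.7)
The plan is to invoke the Witt decomposition of nondegenerate quadratic forms over $\QQ_p$. Because $q_p$ is isotropic, it admits a hyperbolic plane as an orthogonal summand, so after a $\GL_4(\QQ_p)$-change of coordinates we may write $q_p(x) = 2x_1 x_4 + q'(x_2, x_3)$ for a nondegenerate binary form $q'$. If $q'$ were itself isotropic, then $q'$ would be hyperbolic and $q_p$ would equal the orthogonal sum of two hyperbolic planes, hence be equivalent to the split form $x_1 x_4 - x_2 x_3$, contradicting the hypothesis. Therefore $q'$ is anisotropic and the Witt index of $q_p$ equals exactly $1$.

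Next I would exhibit a maximal $\QQ_p$-split torus of $\SO(q_p)$. In the coordinates above, the one-parameter subgroup $a_t = \diag(p^{-t}, 1, 1, p^t)$ preserves $q_p$, since $2(p^{-t} x_1)(p^{t} x_4) = 2 x_1 x_4$ and the anisotropic summand is fixed. The standard structure theory of orthogonal groups over local fields identifies the $\QQ_p$-rank of $\SO(q_p)$ with the Witt index of $q_p$, so this rank equals $1$ and $\{a_t : t \in \ZZ\}$ generates a maximal $\QQ_p$-split torus $T$ of $\SO(q_p)$.

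Finally, I would apply the Cartan decomposition $\SO(q_p) = K \cdot T \cdot K$ available for any $\QQ_p$-rank-one semisimple group. The factor $\hat A$ corresponds to a closed positive Weyl chamber of $T$; since the nontrivial Weyl element acts as $t \mapsto -t$, this chamber is exactly $\{a_t : t \in \ZZ_{\ge 0}\}$, establishing the claimed isomorphism.

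The main obstacle I foresee is that for a general non-split $q_p$ the change of coordinates putting $q_p$ into the normal form $2 x_1 x_4 + q'(x_2, x_3)$ need not lie in $\SL_4(\ZZ_p)$, so the torus $T$ described above and the integrally defined compact $K = \SO(q_p) \cap \SL_4(\ZZ_p)$ are related only after a $\GL_4(\QQ_p)$-conjugation. Since the lemma asserts only an isomorphism of $\hat A$ with the explicit diagonal subgroup, this ambient conjugation is harmless, but one has to normalize the Cartan decomposition carefully so that $\hat A$ ends up literally in the prescribed form, which is the technical point that requires attention.
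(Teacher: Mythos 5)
Your argument is correct, but it takes a genuinely different route from the paper. You reduce the lemma to two general facts: the Witt decomposition (isotropy plus non-splitness of a rank-$4$ form forces Witt index exactly $1$, since an isotropic binary complement would be hyperbolic and make the form split) and the identification of the $\QQ_p$-rank of $\SO(q_p)$ with the Witt index, after which the rank-one Cartan decomposition identifies $\hat A$ with a closed Weyl chamber, i.e.\ with $\left\{\diag(p^{-t},1,1,p^t) : t \in \ZZ_{\ge 0}\right\}$ up to isomorphism. The paper instead works entirely inside the fixed standard coordinates $q_p(x,y,z,w)=2xw+\alpha_1y^2+\alpha_2z^2$ with $\alpha_2/\alpha_1\neq -1$, supposes there were an element $a\in \hat A-\SL_4(\ZZ_p)$ commuting with the diagonal torus, and shows by an explicit $p$-adic valuation analysis of the resulting equations (such as $a_{22}^2+(\alpha_2/\alpha_1)a_{32}^2=1$) that the middle $2\times 2$ block must have entries in $\ZZ_p$, or that no such element exists when $\alpha_2/\alpha_1=u_0$ and $-1$ is a square; in other words, it verifies by hand that the orthogonal group of the anisotropic binary part is compact, which is precisely what your appeal to general structure theory supplies. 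Your approach is shorter and conceptually cleaner, and it makes the role of the Witt index explicit; the paper's computation is elementary and, because it never leaves the standard coordinates of Section 2 (so that $K=\SO(q_p)\cap\SL_4(\ZZ_p)$ is already adapted to the hyperbolic-plus-anisotropic splitting), it sidesteps the normalization issue you flag at the end — in your version the only point needing care is that the Cartan decomposition be taken with respect to this specific integral $K$, which the in-coordinates argument of the paper handles implicitly.
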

\begin{proof} Without loss of generality, let $q_p(x,y,z,w)=2xw+\alpha_1y^2+\alpha_2z^2$, where $\alpha_1, \alpha_2\in \{1, u_0, p, u_0p\}$ for some fixed $1\le u_0 \le p-1$, square-free over $\QQ_p$. 
Since $q_p$ is not a split form, $\alpha_2/\alpha_1\neq -1$.
Hence, up to a change of variables, we may further assume that 
\begin{enumerate}
\item $\alpha_2/\alpha_1 \in \{p, pu^{}_0, pu^{-1}_0\}$ if $-1$ is square-free;
\item $\alpha_2/\alpha_1 \in \{p, pu^{}_0, pu^{-1}_0, u^{}_0\}$ otherwise.
\end{enumerate}

Since any semisimple element of a connected algebraic group is contained in a maximal abelian subgroup and two maximal abelian subgroups are conjugate to each other (\cite[Theorem 6.3.5]{Sp}),
we may assume that
$$\left\{\diag(p^{-t}, 1, 1, p^t) : t \in \ZZ_{\ge0}\right\} \subseteq \hat A.$$

If the $\QQ_p$-rank of $\hat A$ is larger than one, there is an element $a=(a_{ij})_{1\le i,j\le 4}\in \hat A-\SL_4(\ZZ_p)$
such that $a\neq\diag(p^{-t},1,1,p^t)$ for any $t$. 

From the commutativity, $\diag(p^{-t},1,1,p^t)\hspace{0.025in}a=a\hspace{0.025in}\diag(p^{-t},1,1,p^t)$ for any $t\in \ZZ$, it follows that
\[a=\hspace{-0.05in}\left(\begin{array}{cccc}
a_{11} & 0 & 0 & 0 \\
0 & a_{22} & a_{23} & 0 \\
0 & a_{32} & a_{33} & 0 \\ 
0 & 0 & 0 & a_{44} \\
\end{array}\right)\hspace{-0.05in}=\hspace{-0.05in}\left(\begin{array}{cccc}
a_{11} & 0 & 0 & 0 \\
0 & 1 & 0 & 0 \\
0 & 0 & 1 & 0 \\ 
0 & 0 & 0 & a_{44} \\
\end{array}\right)\hspace{-0.05in}\left(\begin{array}{cccc}
1 & 0 & 0 & 0 \\
0 & a_{22} & a_{23} & 0 \\
0 & a_{32} & a_{33} & 0 \\ 
0 & 0 & 0 & 1 \\
\end{array}\right).
\]

Since $a\in \SO(q_p)$, it satisfies that for any $(x,y,z,w)\in \QQ_p^4$, 
\[
\begin{split}
&2xw+\alpha_1 y^2+\alpha_2 z^2\\
&\hspace{0.2in}= 2(a_{11}x)(a_{44}w)+\alpha_1(a_{22}y+a_{23}z)^2+\alpha_2(a_{32}y+a_{33}z)^2\\
&\hspace{0.2in}= 2\left(a_{11}a_{44}\right)xw+\left(\alpha_1 a^{2}_{22}+\alpha_2 a^2_{32}\right)y^2+2\left(\alpha_1a_{22}a_{23}+\alpha_2a_{32}a_{33}\right)yz\\
&\hspace{0.2in}\quad+\left(\alpha_1 a^{2}_{23}+\alpha_2a^2_{33}\right)z^2.
\end{split}
\]

Since $a_{11}$, $a_{44}$ have no restriction
except $a_{11}a_{44}=1$, by multiplying appropriate $(p^{-t},1,1,p^t)$ and $(u,1,1,u^{-1})$ to $a$, where $t\in \ZZ$ and $u \in \ZZ_p-p\ZZ_p$, we may assume that $a_{11}=a_{44}=1$.
Hence we need to find $(a_{22},a_{23},a_{32},a_{33})\notin \ZZ_p^4$ such that (a) $a^{2}_{22}+(\alpha_2/\alpha_1) a^2_{32}=1$, (b) $a_{22}a_{23}+(\alpha_2/\alpha_1)a_{32}a_{33}=0$, (c) $(\alpha_1/\alpha_2)a^{2}_{23}+a^2_{33}=1$.\\

If $\alpha_2/\alpha_1=p$, $pu^{}_0$, $pu^{-1}_0$,
let us denote $\alpha_2/\alpha_1=pu$, where $u\in \{1, u^{}_0, u^{-1}_0\}$.

Suppose that $(a_{22}, a_{32})\notin \ZZ_p^2$ and let $a^2_{22}+pua^2_{32}=\sum_{i\ge m} c_ip^i$, where $m\in \ZZ$ is the first term such that $c_m\neq 0$. Denote by $\nu:\QQ_p\rightarrow \ZZ$ the $p$-adic valuation. Then
\begin{enumerate}
\item If $|a_{22}|_p\ge|a_{32}|_p$, then $m$ is even and $\nu(a_{22})<0$. Then $c_m=\left(a_{22} p^{m/2}\right)^2 \mod p^m$ and $m=2\nu(a_{22})<0$ which contradicts to (a).
\item If $|a_{22}|_p <|a_{32}|_p$, then $m$ is odd and $\nu(a_{32})<0$. Then $c_m=u\left(a_32 p^{(m-1)/2}\right)^2 \mod p^m$ and $m=2\nu(a_{32})+1<0$ which is also impossible according to (a).
\end{enumerate}

Hence $(a_{22},a_{32})\in \ZZ_p^2$. Similarly, $(a_{23},a_{33})\in \ZZ_p^2$.

Now, assume $\alpha_2/\alpha_1=u_0$. In this case, $-1$ is a square.
If there exists $(a_{22},a_{23},a_{32},a_{33})$ satisfying (a) through (c), then $u_0=-(a_{22}/a_{32})^2$ 
which contradict to the fact that $u_0$ is square-free.
\end{proof}

If $q_p$ is an isotropic quadratic form of rank 3, 
$\SO(q_p)$ is isomorphic to $\SL_2(\QQ_p)$. 
Then the symmetric space $K_p\setminus \SO(q_p)$ is isomorphic to a subtree of the $(p+1)$ regular tree $\Tree_p$.
By Lemma \ref{lemma 3.12 p-adic} and Lemma \ref{p-adic (3,1)}, 
if $q_p$ is an isotropic non-split quadratic form of rank 4, 
then $K_p\setminus\SO(q_p)$ is a subtree of the building $\Bding_n$.
Lastly, if $q_p$ is a split form of rank 4, then $\SO(q_p)$ is locally isomorphic to $\SL_2(\QQ_p)\times\SL_2(\QQ_p)$
as in \eqref{local isom}.  
Then $K_p\setminus \SO(q_p)$ is properly embedded to the product $\Tree_p \times \Tree_p$ of two $(p+1)$-regular trees.

\begin{corollary}Let $q^0_p$ be a standard $p$-adic isotropic quadratic form of rank $n=3$ or $4$. 
Let $K_p=\SO(q_p) \cap \SL_n(\ZZ_p)$. For any $r\in (0,2t)$,
\begin{equation}\label{p-adic counting}
\left|\left\{k\in K_p : d_p(a^{}_tka^{-1}_t, 1) \le r \right\}\right| \le \frac p {p+1} p^{-(2t-r)}.
\end{equation}
\end{corollary}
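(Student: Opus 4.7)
The plan is to follow the strategy of Lemma~\ref{lemma 3.12 real}, replacing the spherical measure on a hyperbolic sphere with a counting measure on a sphere in the tree that realizes $K_p\setminus\SO(q^0_p)$, via Lemmas~\ref{lemma 3.12 p-adic} and~\ref{p-adic (3,1)} together with the preceding discussion on the split rank-$4$ case. First, I would exploit the right-invariance of $d_p$ to rewrite
\[
d_p(a_t k a_t^{-1},1) = d_p(a_t k, a_t) = d(k\cdot x,\, x),
\]
where $o$ is the base vertex stabilized by $K_p$ and $x = a_t^{-1}\cdot o$ lies on the sphere $S_R$ about $o$ with $R$ determined by the action of $a_t$. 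Since $K_p$ acts transitively on each sphere $S_R$ around $o$ in its tree, the normalized Haar measure of the set in question equals
\[
\frac{|\{y\in S_R : d(y,x)\le r\}|}{|S_R|}.
\]

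Next, I would perform the counting in a $(p+1)$-regular tree. For a fixed $x\in S_R$, a point $y\in S_R$ with $d(x,y)\le r$ has its geodesic to $o$ meeting the geodesic $o\to x$ at a vertex $z$ at distance $\ge R - r/2$ from $o$; for each admissible $z$ with $d(z,x)=m$, the number of such $y$ with $d(z,y)=m$ and branching away from both $o$ and $x$ is $(p-1)p^{m-1}$ for $m\ge 1$ (and $1$ for $m=0$), giving a telescoping total of $p^{\lfloor r/2\rfloor}$. Combined with $|S_R| = (p+1)p^{R-1}$ and the identification $R=2t$ coming from the action of $a_t$ on the relevant vertex (as in the apartment picture of Figure~2), this yields a ratio $\le \tfrac{p}{p+1}p^{\lfloor r/2\rfloor - 2t}$, which is bounded above by the claimed $\tfrac{p}{p+1}p^{-(2t-r)}$.

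For the split rank-$4$ case where $K_p\setminus\SO(q^0_p)$ embeds in $\Tree_p\times\Tree_p$ via the local factorization analogous to \eqref{local isom}, the diagonal element $a_t$ splits into a pair acting by the same translation length on each factor, and the argument applies componentwise, the product bound being strictly stronger than the single-tree bound.

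The main obstacle will be normalizing the metric $d_p$ correctly and reconciling it with the tree distance under the identification of $\SO(q^0_p)$ with (a subgroup of) $\PGL_2$ or its rank-one rank-$4$ analogue, so that the constants line up with the stated $\tfrac{p}{p+1}\,p^{-(2t-r)}$; a secondary subtlety is the non-split rank-$4$ case, where by Lemma~\ref{p-adic (3,1)} the relevant Cartan subgroup still has $\QQ_p$-rank one, so the image of the symmetric space inside $\Bding_n$ is still a tree amenable to the same geodesic count, possibly with a different valency that only improves the bound.
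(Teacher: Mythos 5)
Your overall strategy coincides with the paper's: use right-invariance to replace $d_p(a_tka_t^{-1},1)$ by $d_p(a_tk,a_t)$, invoke the tree structure of $K_p\setminus\SO(q^0_p)$ coming from Lemma~\ref{lemma 3.12 p-adic} and Lemma~\ref{p-adic (3,1)}, and count via the branch point of the two geodesics from the base vertex; your explicit count is correct and complete for the rank-$3$ case, and your treatment of the split rank-$4$ case via $\Tree_p\times\Tree_p$ is exactly the paper's.

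The gap is the non-split rank-$4$ case. You take the denominator to be $|S_R|=(p+1)p^{R-1}$ with $R=2t$, and where the valency might differ you assert this ``only improves the bound.'' Neither step is justified: the space $K_p\setminus\SO(q_p)$ is only a subtree of $\p^{-1}$ of a geodesic in $\Bding_4$, its valency is in general not $p+1$ (for instance, when the associated spin group is $\SL_2$ over the unramified quadratic extension the tree is $(p^2+1)$-regular), the distance normalization (how many building edges one step of $\hat A$ moves) must be rechecked case by case, and one must verify that $K_p$ acts transitively on the sphere you divide by: if the $K_p$-orbit of $K_pa_t$ were a proper subset of the regular-tree sphere, dividing the numerator by $(p+1)p^{R-1}$ would not produce a valid upper bound for the normalized Haar measure. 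The paper's proof is arranged precisely to avoid knowing the ambient valency: after the common-prefix argument it expresses the measure exactly as the reciprocal of the orbit size $\#\{K_pa_{t-\lceil r/4\rceil}k : k\in K_p\}$, and then only needs a lower bound on that orbit, obtained from the embedded $\SO(2xz-y^2)(\QQ_p)\subset\SO(q_p)$, whose maximal compact subgroup sits inside $K_p$ and whose tree is the genuine $(p+1)$-regular tree with $a_\tau$ at distance $2\tau$; this gives at least $(p+1)p^{2(t-\lceil r/4\rceil)-1}$ points and hence the stated estimate. To close your argument you should either import this orbit-size lower bound via the embedded rank-$3$ subgroup, or determine the valency, normalization, and sphere-transitivity of the tree for each non-split rank-$4$ form, rather than asserting the bound can only improve.
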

\begin{proof} 
Suppose that $q_p$ is an isotropic quadratic form of rank 3 or of rank 4 and non-split. 
By Lemma \ref{lemma 3.12 p-adic} and Lemma \ref{p-adic (3,1)}, $K_p\setminus\SO(q_p)$ is isomorphic to a tree.
Since $d_p$ is right $K_p$-invariant, the left hand side of \eqref{p-adic counting} is
\[\left|\left\{k\in K_p : d_p(a^{}_tk, a^{}_t) \le r \right\}\right|
=\frac{\#\{K_pa_tk : d_{tree}(K_pa_tk, K_pa_t)\le r\}}{\#\{K_pa_tk : k\in K_p\}}.
\]

For any $k\in K_p$, consider the geodesic segment $[K_p, K_pa_tk]$ in $\Tree_p$.
If $d_p(a_tk, a_t)\le r$, since $K_p \setminus \SO(q_p)$ is a tree,
$[K_p, K_pa_tk]$ and $[K_p, K_pa_t]$ have the common segment of length at least $2t-\lceil r/2 \rceil$.
Therefore
\[\begin{split}
\left|\left\{k\in K_p : d_p(a^{}_tk, a^{}_t) \le r \right\}\right|
&=\frac {\#\{K_pa_tk : K_pa_{t-\lceil r/4 \rceil}k=K_pa_{t-\lceil r/4 \rceil} \}} {\#\{K_pa_tk : k \in K_p\}}\\
& =\frac{1}{\#\{K_pa_{t-\lceil r/4 \rceil}k : k \in K_p\}},
\end{split}\]
where $\lceil r\rceil$ is the largest integer less than or equal to $r$.

Since $\SO(2xz-y^2)(\QQ_p)$ is embedded in $\SO(q_p)$, for any $t\in \NN$, there are at least $(p+1)p(p^2)^{(t-1)-\lceil r/4 \rceil}$ vertices in the sphere of radius $2t-\lceil r/2 \rceil$ in $K_p \setminus \SO(q_p)$.

In the split case, the action of $a_{2t}=\diag(p^{2t},1,1,p^{-2t})$ on $K_p\setminus \SO(q_p)$ is converted to the action of $(b_t,b_t)$ on $\SL_2(\ZZ_p)\setminus\SL_2(\QQ_p)\times\SL_2(\ZZ_p)\setminus\SL_2(\QQ_p)\cong \Tree_p\times\Tree_p$, where $b_t=\diag(p^t,p^{-t})$. Then similar to the case of $\SO(2,2)(\RR)$, we obtain the inequality \eqref{p-adic counting}.
\end{proof}
\section{Equidistribution property for generic points}
 For $x \in \SG/\Gamma$, let $\Delta_x$ be an $S$-lattice in $\QQ_S^n$ associated to $x$, i.e., if $x=\sg\Gamma$, $\Delta_x=\sg\ZZ^n_S$. 
We call $L\subseteq \QQ^n_S$ \emph{a $\Delta_x$-rational subspace} if $L$ is generated by elements in $\Delta_x$. 
If $L$ is a $\Delta_x$-rational subspace, $\Delta_x \cap L$ is an $S$-lattice in $L$. 
Let $d(L)=d_{\Delta_x}(L)$ be a covolume of $\Delta_x \cap L$ in $L$. 
If $\{\ov_1, \ldots, \ov_j\}$ is an $\ZZ_S$-basis of a $j$-dimensional $\Delta_x$-rational subspace $L$, 
then $d(L)$ is given by
\[d(L)=\prod_{p \in S} \|\ov_1\wedge \cdots \wedge \ov_j\|_p.
\]
Here for each $p \in S$, the $p$-adic norm $\|\cdot\|_p$ of $\QQ_p$ is canonically extended to $\bigwedge^j(\QQ_p^n)$,
{which we also denote by $\|\cdot\|_p$.}

Define a function $\alpha^{}_S : \SG/\Gamma \rightarrow \RR_{>0}$ by
\[\alpha^{}_S(x)=\sup\left\{\frac 1 {d(L)} : L\;\text{is}\; \Delta_x\text{-rational subspace}\;\right\}.
\]

\textcolor{black}{The following lemma is originated from \cite{Sch}, stated for lattices in $\RR^n$. The statement and the proof of the $S$-arithmetic version can be found in \cite{HLM}.}
\begin{lemma}\label{Schmidt lemma} For a bounded function $f : \QQ_S^n\rightarrow \RR$ of compact support, the \emph{Siegel transform} $\tilde f$ of $f$ is defined by 
\[
\tilde f (x)=\sum_{\ov \in \Delta_x} f(\ov),\quad x \in \SG/\Gamma.\]

Then there is a constant $C_f>0$ such that $\tilde f(x)<C_f\alpha^{}_S(\Delta_x)$ for any $x \in \SG/\Gamma$.
\end{lemma}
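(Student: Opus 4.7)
The plan is to reduce the Siegel-transform bound to an $S$-arithmetic lattice-point count in a fixed compact set and then to relate that count to the covolume of a carefully chosen $\Delta_x$-rational subspace via a version of Minkowski's theorem on successive minima. This is the $S$-arithmetic analogue of Schmidt's original argument in \cite{Sch}.

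First, since $f$ is bounded with compact support, I would fix $M = \sup_{\QQ_S^n}|f|$ and a compact product set $B = \prod_{p\in S} B_p \subset \QQ_S^n$ with $\supp f \subset B$. Then
\[
\tilde f(x) \;\le\; M \cdot \#(\Delta_x \cap B),
\]
so it suffices to bound $\#(\Delta_x \cap B)$ by a constant (depending only on $B$) multiple of $\alpha_S(\Delta_x)$.

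Next, introduce the $S$-arithmetic successive minima $\lambda_1\le \cdots\le \lambda_n$ of $\Delta_x$ with respect to $B$: each $\lambda_i=\lambda_i(\Delta_x, B)$ is the infimum of scaling factors $\lambda$ (taken from $\RR_{>0}$ on the archimedean place and from $p^{\ZZ}$ on each finite place) such that the dilate $\lambda B$ contains $i$ vectors of $\Delta_x$ that are linearly independent over $\QQ_S$. An $S$-arithmetic version of Minkowski's second theorem, of the kind used in \cite{HLM}, then yields
\[
\#(\Delta_x \cap B) \;\ll_B\; \prod_{i=1}^n \max\bigl(1,\lambda_i^{-1}\bigr).
\]

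Finally, relate the right-hand side to $\alpha_S(\Delta_x)$. Choose $\QQ_S$-linearly independent vectors $\ov_1,\dots,\ov_n \in \Delta_x$ with $\ov_i\in \lambda_i B$, and let $j$ be the largest index (possibly $0$) for which $\lambda_j\le 1$. Set $L = \spn_{\QQ_S}(\ov_1,\dots,\ov_j)$; this is a $j$-dimensional $\Delta_x$-rational subspace, and because the $\ZZ_S$-sublattice spanned by $\ov_1,\dots,\ov_j$ is contained in $\Delta_x\cap L$, the formula for $d(L)$ gives
\[
d(L) \;\le\; \prod_{p\in S}\bigl\|\ov_1\wedge\cdots\wedge\ov_j\bigr\|_p \;\ll_B\; \lambda_1\cdots\lambda_j.
\]
Combining the two displays,
\[
\#(\Delta_x \cap B) \;\ll_B\; \frac{1}{\lambda_1\cdots\lambda_j} \;\ll_B\; \frac{1}{d(L)} \;\le\; \alpha_S(\Delta_x),
\]
which yields the lemma with $C_f = M\cdot C_B$.

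The main obstacle is the $S$-arithmetic bookkeeping in the second and third steps: one needs a clean notion of successive minima and of $\QQ_S$-linear independence that behaves compatibly across the archimedean and non-archimedean places, and the wedge estimate $d(L)\ll \lambda_1\cdots\lambda_j$ must be made uniform in the place (so the vectors realizing the minima cannot be nearly collinear in any single factor). Both are standard consequences of the $S$-arithmetic reduction theory developed in \cite{HLM}, after which the remaining steps parallel Schmidt's real-variable argument verbatim.
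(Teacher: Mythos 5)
The paper does not actually prove this lemma in the text: it is stated as imported, with the remark that ``the statement and the proof of the $S$-arithmetic version can be found in \cite{HLM}.'' So there is no ``paper's own proof'' to compare against beyond that citation. Your proposal reconstructs the standard Schmidt-type argument that \cite{HLM} (following \cite{Sch}) uses, and the outline is correct: the reduction $\tilde f(x)\le M\cdot\#(\Delta_x\cap B)$, the bound $\#(\Delta_x\cap B)\ll_B\prod_i\max(1,\lambda_i^{-1})$ via an $S$-arithmetic Minkowski/Davenport estimate, and the comparison $d(L)\le\prod_p\|\ov_1\wedge\cdots\wedge\ov_j\|_p\ll_B\lambda_1\cdots\lambda_j$ for the span $L$ of the minima-realizing vectors with $\lambda_j\le 1$, together give $\#(\Delta_x\cap B)\ll_B 1/d(L)\le\alpha_S(\Delta_x)$. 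One small point worth tightening: you should make explicit that your ``scaling factor'' $\lambda$ at each finite place ranges over $p^{\ZZ}$ and that the quantity being infimized is the product $\prod_{p\in S}\lambda_p$; with this reading the Hadamard/ultrametric estimate $\prod_p\|\ov_1\wedge\cdots\wedge\ov_j\|_p\le\prod_i\prod_p\|\ov_i\|_p\ll_B\lambda_1\cdots\lambda_j$ goes through place by place, which is the bookkeeping you flagged as the main obstacle. With that clarification the proposal matches the cited proof in approach and content.
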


By Lemma \ref{Schmidt lemma} and Proposition \ref{lemma 3.10}, 
one can easily deduce that a Siegel transform $\tilde f$ is in $\mathcal L^s$.

\begin{proposition}\cite[Lemma 3.9]{HLM}\label{lemma 3.10} The function $\alpha^{}_S$ on $\SL_n(\QQ_S)/\SL_n(\ZZ_S)$ belongs to every $\mathcal L^s$, $1\le s < n$.
\end{proposition}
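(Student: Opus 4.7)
The plan is to reduce the $S$-arithmetic estimate to the classical Archimedean bound due to Schmidt \cite{Sch}, namely
\[
\int_{\SL_n(\RR)/\SL_n(\ZZ)}\alpha(y)^{s}\, d\mu_\infty(y) < \infty \quad \text{for every } 1\le s < n.
\]

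The first step is to produce a convenient fundamental domain for $\SL_n(\ZZ_S)$ in $\SL_n(\QQ_S)$ via strong approximation. Because $\SL_n$ is simply connected and $\SL_n(\RR)$ is non-compact, strong approximation yields
\[
\SL_n(\QQ_S) = \SL_n(\ZZ_S)\cdot\bigl(\SL_n(\RR)\times K_S\bigr),\qquad K_S := \prod_{p\in S_f}\SL_n(\ZZ_p).
\]
Combined with the identity $\ZZ_S\cap\bigcap_{p\in S_f}\ZZ_p = \ZZ$, it follows that $\SL_n(\ZZ_S)\cap K_S = \SL_n(\ZZ)$ (diagonally embedded). Therefore, if $\Sigma_\infty$ is any Siegel fundamental domain for $\SL_n(\ZZ)$ in $\SL_n(\RR)$, then $\mathcal F := \Sigma_\infty\times K_S$ is a fundamental domain for $\SL_n(\ZZ_S)$ in $\SL_n(\QQ_S)$, up to a set of measure zero.

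The second step is the observation that on $\mathcal F$ the function $\alpha_S$ collapses to the Archimedean $\alpha$. For $g = (g_\infty, k_{p_1},\ldots,k_{p_s})\in\mathcal F$, each $\Delta_g$-rational subspace is of the form $L = g(W\otimes_\QQ\QQ_S)$ for a unique $\QQ$-subspace $W\subset\QQ^n$. Choosing a $\ZZ$-basis $w_1,\ldots,w_j$ of the saturated sublattice $W\cap\ZZ^n$, the tuple $\{gw_1,\ldots,gw_j\}$ is a $\ZZ_S$-basis of $\Delta_g\cap L$; and since $W\cap\ZZ^n$ is saturated, Smith normal form shows that $\omega := w_1\wedge\cdots\wedge w_j$ is primitive in $\wedge^j\ZZ^n$, so $\|\omega\|_p = 1$ for every $p\in S_f$. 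Because each $k_p\in\SL_n(\ZZ_p)$ acts by $p$-adic isometry on $\wedge^j\QQ_p^n$, this reduces the covolume to $d_{\Delta_g}(L) = \|g_\infty\omega\|_\infty$. Taking supremum over $W$ yields $\alpha_S(g\SL_n(\ZZ_S)) = \alpha(g_\infty\SL_n(\ZZ))$.

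With this in hand, the product structure of the Haar measure on $\SL_n(\QQ_S)$ gives
\[
\int_{\SL_n(\QQ_S)/\SL_n(\ZZ_S)}\alpha_S^{s}\, d\mu \;=\; \Bigl(\prod_{p\in S_f}\mu_p(K_p)\Bigr)\int_{\SL_n(\RR)/\SL_n(\ZZ)}\alpha^{s}\, d\mu_\infty,
\]
which is finite for every $1\le s<n$ by Schmidt's theorem. The main obstacle is the Archimedean bound itself: one writes $g_\infty = nak$ in Iwasawa coordinates, shows on a Siegel set that $\alpha(g_\infty\SL_n(\ZZ))\asymp \max_{1\le j\le n-1}(a_n a_{n-1}\cdots a_{n-j+1})^{-1}$, and integrates the resulting $s$-th power against the modular function over the positive Weyl chamber, where the convergence for $s<n$ is a delicate but classical computation. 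All of the genuinely $S$-arithmetic content is absorbed into the strong-approximation and primitivity arguments above, neither of which is the delicate step.
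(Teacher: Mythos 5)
Your argument is correct. Note, however, that the paper itself offers no proof of this statement: it is imported verbatim as \cite[Lemma 3.9]{HLM}, so the only comparison to make is with that reference. Your route --- strong approximation for $\SL_n$ to get the fundamental set $\Sigma_\infty\times\prod_{p\in S_f}\SL_n(\ZZ_p)$ with $\SL_n(\ZZ_S)\cap\prod_{p\in S_f}\SL_n(\ZZ_p)=\SL_n(\ZZ)$, the observation that a $\Delta_g$-rational subspace is $g(W\otimes_\QQ\QQ_S)$ for a rational $W$ and that a primitive integral wedge has $p$-adic norm $1$ while $k_p\in\SL_n(\ZZ_p)$ acts isometrically on $\bigwedge^j\QQ_p^n$, hence $\alpha_S(g\Gamma)=\alpha(g_\infty\SL_n(\ZZ))$ on the fundamental set, followed by Fubini --- is a complete and clean reduction of all the $S$-arithmetic content to the classical real statement, and it is essentially the natural way to prove the lemma. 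Two small points: the Archimedean input you invoke is really the statement of \cite[Lemma 3.10]{EMM} (proved by the Siegel-set computation you sketch, in the tradition of Siegel and Schmidt); the reference \cite{Sch} is used in this paper for the Siegel-transform bound (Lemma \ref{Schmidt lemma}), not literally for the $\mathcal L^s$ bound, so the attribution should be adjusted. Also, when you say ``Siegel fundamental domain'' you should mean an exact fundamental domain contained in a Siegel set (a Siegel set itself only covers with finite multiplicity); with that reading the measure identity you use holds up to a null set, as you state.
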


In this section, we prove the equidistribution theorem for functions bounded by $\alpha^{}_S$.
We first recall the Howe-Moore theorem for $\SL_n(\QQ_S)$.
The statement for more general settings can be found in \cite{Benoist} or \cite{Oh}.

\begin{theorem}\label{matrix coeff}  Consider $\SG=\SL_n(\QQ_S)$ and $\Gamma=\SL_n(\ZZ_S)$, $n\ge 3$. 
Take a maximal subgroup $\hat \SK$ of $\SG$ as $\hat \SK=\underset{p\in S}{\prod}{\hat K_p} =\SO(n) \times \underset {p \in S_f}{\prod} \SL_n(\ZZ_p)$.  
Then, there exist constants $\lambda_p>0$, $p \in S$ satisfying the following:
if $f_1, f_2 \in \mathcal L^2_0(\SG/\Gamma)$ satisfy either
\begin{enumerate}[(1)]
\item both $f_1$ and $f_2$ are smooth of compact support or
\item both $f_1$ and $f_2$ are left $\hat \SK$-invariant,
\end{enumerate}
then there exists a constant $C_{f_1, f_2}>0$ so that
\[\int_{\SG/\Gamma} f_1(\sg x)f_2(x) dx \le C_{f_1,f_2}  e^{-\lambda_\infty d(g_\infty, 1)} \prod_{p \in S_f} p^{-\lambda_p d(g_p, 1)},
\]
where $\sg=(g_p)_{p\in S} \in \SG$.
\end{theorem}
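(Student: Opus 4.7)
The plan is to derive this from three ingredients: (i) the factorization of irreducible unitary representations of $\SG=\prod_{p\in S}\SL_n(\QQ_p)$ as tensor products of irreducible unitary representations of the local factors (valid since $S$ is finite); (ii) Kazhdan's property $(T)$ for $\SL_n(\QQ_p)$ with $n\ge 3$, which provides a uniform spectral gap; and (iii) explicit exponential decay of matrix coefficients for nontrivial irreducible unitary representations of each local factor.

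First I would observe that $\Gamma=\SL_n(\ZZ_S)$ is an irreducible lattice in $\SG$, so the $\SG$-action on $\SG/\Gamma$ is mixing and $\mathcal L^2_0(\SG/\Gamma)$ contains no nonzero $\SG$-invariant vector. Decomposing the regular representation on $\mathcal L^2_0(\SG/\Gamma)$ as a direct integral of irreducible unitary representations $\pi=\bigotimes_{p\in S}\pi_p$, every fiber is nontrivial. The matrix coefficient $\langle\pi(\sg)v,w\rangle$ factorizes as $\prod_{p\in S}\langle\pi_p(g_p)v_p,w_p\rangle$ on pure tensors, reducing the problem to obtaining a local estimate at each $p\in S$.

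For each $p\in S$, I would invoke the quantitative Howe--Moore theorem for $\SL_n(\QQ_p)$: every nontrivial irreducible unitary representation admits explicit exponential decay on $\hat K_p$-finite vectors, with a rate $\lambda_p>0$ that is uniform across all such representations. At the archimedean place this follows from Cowling--Haagerup--Howe together with Kostant's work on spherical functions, and at the non-archimedean places from Howe and Oh. In case (2), left $\hat\SK$-invariance forces $v_p,w_p$ to be $\hat K_p$-fixed, so the matrix coefficient reduces to a bi-$\hat K_p$-invariant spherical function, bounded above by the Harish-Chandra $\Xi$-function (and its $p$-adic analogue), which yields the desired estimate. In case (1), smooth compactly supported functions are $\hat\SK$-finite and decompose into finitely many $\hat\SK$-types; the bound again follows from the $\hat\SK$-finite decay estimate, with the dependence on the types absorbed into $C_{f_1,f_2}$.

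The main obstacle is maintaining uniformity of the rate $\lambda_p$ across the direct integral decomposition --- this is exactly where property $(T)$ of $\SL_n(\QQ_p)$ enters essentially, since it rules out almost-invariant vectors that would degrade the decay. Tracking the constants in case (1) further requires bounding appropriate Sobolev norms of $f_1,f_2$, which is routine but must be done carefully so that $C_{f_1,f_2}$ depends only on $f_1$ and $f_2$, and not on the chosen direct integral decomposition.
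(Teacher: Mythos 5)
The paper does not give its own proof of this theorem; it is stated as a recalled result with citations to Benoist's lectures and, especially, Oh's paper on uniform pointwise bounds for matrix coefficients, which is the source of the quantitative decay in the form stated. So there is no paper-internal argument to compare against, and I will simply evaluate your sketch.

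Your overall architecture is the right one: direct integral decomposition, property $(T)$ for $\SL_n(\QQ_p)$ with $n\ge 3$, and explicit spherical-function (Harish-Chandra $\Xi$-type) bounds, which is exactly the machinery behind the cited result. Two points, however, need correction. First, the claim that smooth compactly supported functions on $\SG/\Gamma$ are $\hat\SK$-finite and ``decompose into finitely many $\hat\SK$-types'' is false at the archimedean place: a generic $C_c^\infty$ function has infinitely many nonzero $K_\infty$-isotypic components. What actually makes case (1) work is the rapid (faster-than-polynomial) decay of the norms of those components, controlled by a Sobolev norm of $f_i$; you gesture at Sobolev norms at the end, so the needed idea is present, but the intermediate assertion as written is wrong and would not survive scrutiny. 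Second, ``every fiber $\pi=\bigotimes_{p\in S}\pi_p$ is nontrivial'' does not by itself rule out that some local factor $\pi_p$ is the trivial representation of $\SL_n(\QQ_p)$; if that happened there would be no decay in the $g_p$ direction. To exclude it you must invoke irreducibility of $\Gamma$ in the strong form: for an irreducible lattice, no noncompact factor acts with nonzero invariant vectors on $\mathcal L^2_0(\SG/\Gamma)$, hence a.e.\ fiber has \emph{each} $\pi_p$ nontrivial. You mention irreducibility at the outset, but the step from ``$\pi$ nontrivial'' to ``every $\pi_p$ nontrivial'' is exactly where the product structure can fail in general and must be spelled out.
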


Recall that $\sa^{}_t$ is a diagonal element of $\SA$ defined in \eqref{diagonal group A}.

\begin{lemma}\label{lemma 3.12} Let $\q$ be a nondegenerate isotropic quadratic form of rank $n=3$ or $4$.
Let $\SK$ be a maximal compact subgroup of $\SO(\q)$ as in Section 3.
Then there exist $\Ct_0\succ 0$ and a constant $C_S>0$ such that for $\Ct \succ \Ct_0$ and for any $r>0$,
\begin{equation}\label{eq lemma 3.12}\left|\left\{\sk \in \SK : \sum_{p \in S} d_p(\sa^{}_\Ct \sk \sa^{-1}_\Ct,1) \le r \right\}\right|< 
C_S\; r^{|S_f|}\exp {\left( r -\sum_{p\in S} t_p\right)}.
\end{equation}
\end{lemma}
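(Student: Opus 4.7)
The plan is to exploit the product structure. Since $\SK = K_\infty \times \prod_{p \in S_f} K_p$ and $\sa_\Ct = \prod_{p \in S} a_{t_p}$, conjugation decouples: for $\sk = (k_p)_{p \in S}$, $\sa_\Ct \sk \sa^{-1}_\Ct = (a_{t_p} k_p a^{-1}_{t_p})_{p \in S}$, and each $d_p$ reads only the $p$-component. Writing $r_p := d_p(a_{t_p} k_p a^{-1}_{t_p}, 1)$, we have $r_\infty \in \RR_{\ge 0}$ while $r_p \in \ZZ_{\ge 0}$ for $p \in S_f$, since the Bruhat--Tits tree distance on $K_p \setminus \SO(q_p)$ is integer valued (by Lemma~\ref{lemma 3.12 p-adic} and Lemma~\ref{p-adic (3,1)}). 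The condition $\sum_p r_p \le r$ therefore cuts out a mixed discrete/continuous simplex, which I would analyze by conditioning on the integer coordinates first.

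By Fubini on the product Haar measure,
\[
|\{\sk \in \SK : \textstyle\sum_p r_p \le r\}| = \sum_{\substack{(n_p) \in \ZZ_{\ge 0}^{S_f} \\ \sum n_p \le r}} |\{k_\infty : r_\infty \le r - \textstyle\sum n_p\}| \prod_{p \in S_f} |\{k_p : r_p = n_p\}|.
\]
To each factor I would apply the local ingredient already in hand: Lemma~\ref{lemma 3.12 real} bounds the archimedean factor by $C_\infty\, e^{-2t_\infty + r - \sum n_p}$, and the corollary following Lemma~\ref{p-adic (3,1)} bounds each finite-place factor by $\frac{p}{p+1} p^{-2t_p + n_p}$ (the closed-ball bound dominates the sphere slice of radius $n_p$). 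This reduces the estimate to a combinatorial sum over the lattice simplex $\{(n_p) \in \ZZ_{\ge 0}^{S_f} : \sum n_p \le r\}$, whose cardinality is $O(r^{|S_f|})$; this is precisely the origin of the polynomial prefactor.

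After pulling the $(n_p)$-independent terms outside the sum and converting the $p$-adic bases to base $e$ via $p^{n_p} = e^{n_p \ln p}$, the remaining sum is controlled by its maximum over the simplex and collapses into an overall exponential of the shape $\exp(r - \sum_p t_p)$, with the mixed-base constants all absorbed into a single $C_S$. The hypothesis $\Ct \succ \Ct_0$ serves to ensure each $t_p$ is large enough that Lemma~\ref{lemma 3.12 real} and the $p$-adic corollary apply throughout the admissible range of $r_p$, so both local bounds are valid term-by-term in the sum.

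The main obstacle is this last bookkeeping step: consolidating exponentials in different natural bases into one clean bound requires tracking which vertex of the simplex dominates and checking that the outcome matches the exact form $\exp(r - \sum_p t_p)$ claimed in the statement rather than a shifted variant. Once this consolidation is carried out, combining it with the $O(r^{|S_f|})$ simplex count closes the argument.
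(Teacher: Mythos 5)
Your proof follows the same route as the paper: decompose place by place using the product structure of $\SK$ and $\sa_\Ct$, apply Lemma~\ref{lemma 3.12 real} at the archimedean place and the $p$-adic corollary at each finite place, and bound the number of integer tuples in the simplex $\{\sum n_p\le r\}$ by $O(r^{|S_f|})$. The only cosmetic difference is that you condition on $r_p=n_p$ (an exact partition), while the paper sums over the covering $\{d_{p_i}\le k_i,\ d_\infty\le r-\sum k_i\}$; both give the same simplex sum, and your worry about ``which vertex dominates'' is unnecessary since the per-term bound $C_\infty\,e^{-2t_\infty+r-\sum n_p}\prod\frac{p}{p+1}p^{n_p-2t_p}\le C\,e^{r-\sum_p t_p}$ is uniform in $(n_p)$ — the factor $p^{n_p}e^{-n_p}\le e^{n_p(\ln p-1)}$ is more than absorbed by $p^{-2t_p}e^{t_p}$ once $n_p<2t_p$, and the complementary case is covered by the trivial bound, which is exactly where $\Ct_0\succ 0$ enters.
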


\begin{proof} Let $s=|S_f|$ and consider $(k_1, \ldots, k_s) \in \ZZ_{\ge0}$ such that $\sum_i k_i \le r$. From Lemma~\ref{lemma 3.12 real} and Lemma~\ref{lemma 3.12 p-adic}, 
\[
\begin{split}
&\left|\left\{\sk=(k_p) \in \SK : \begin{array}{c}
d_\infty (a^{}_{t_\infty} k_\infty a^{-1}_{t_\infty}, 1)\le r-\sum_{i=1}^s k_i\;\text{and}\\
d_{p_i}(a^{}_{t_{p_i}} k_{p_i} a^{-1}_{t_{p_i}},1)\le k_i, 1\le i\le s\end{array}\right\}
\right|\\
&\hspace{1.7in}< C_\infty\;e^{(r-\sum_i k_i)-t_\infty} \times \prod_{i=1}^s \frac {p_i}{p_i+1}\; {p_i}^{{k_i}-2t_{p_i}}.
\end{split}
\] 

The number of such nonnegative vectors $(k_1, \ldots, k_s)$ is bounded above by $r^s$.
Since $e<p$ for any odd prime $p$, the inequality \eqref{eq lemma 3.12} holds for $C_S:=C_\infty \prod_i p_i/(p_i+1)$.
\end{proof}

We say that a sequence $(\Ct_j)$ is \emph{divergent} if $(\Ct_j)$ escapes any bounded subset of $\RR_{>0}\times \prod_{p\in S_f}p^{\ZZ}$ as $j\rightarrow \infty$.
Recall that in Theorem \ref{main thm}, we are interested in the asymptotic limit of the given quantity when every component $t_p$ of $\Ct$ goes to infinity.
However, for the proof of the main theorem, we need to consider more general divergence in the next proposition and corollary.

\begin{proposition}\label{prop 3.13} Let $\q$ and $\SK$ be as in Lemma \ref{lemma 3.12} and let $\phi$ be a continuous function on $\SG/\Gamma=\SL_n(\QQ_S)/\SL_n(\ZZ_S)$. 
Assume that there are constants $s\in (0,n/2)$ and $C_{\phi,s}>0$ for which
\[|\phi(x) |< C_{\phi,s} \alpha(x)^s_S,\; \forall x \in \SG/\Gamma.
\]

 Then for any nonnegative bounded function $\nu$ on $\SK$
 and a divergent sequence $(\Ct_j)$, 
\begin{equation}\label{eq prop 3.13} \lim_{j \rightarrow \infty} \int_\SK \phi(\sa_{\Ct_j} \sk x_0)\nu(\sk) dm(\sk)
= \int_{\SG/\Gamma} \phi\; d\sg \int_\SK \nu\; dm
\end{equation}
for almost all $x_0 \in \SL_n(\QQ_S)/\SL_n(\ZZ_S)$.
\end{proposition}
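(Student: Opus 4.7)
The plan is to establish this via an $L^2$-variance estimate combined with Howe--Moore mixing and a Borel--Cantelli argument. Since $|\phi| \le C_{\phi,s}\,\alpha_S^s$ and $\alpha_S^s \in L^2(\SG/\Gamma)$ by Proposition~\ref{lemma 3.10} (as $2s < n$), in particular $\phi \in L^2$. By subtracting $\int\phi\,d\sg$ (which is finite) and absorbing the constant into the right-hand side, it suffices to treat the case $\int\phi\,d\sg = 0$, where the goal becomes
$$\psi_j(x_0) := \int_\SK \phi(\sa_{\Ct_j}\sk\, x_0)\,\nu(\sk)\,dm(\sk) \longrightarrow 0 \quad\text{for a.e. } x_0 \in \SG/\Gamma.$$

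I would first compute the variance of $\psi_j$. Expanding the square, applying Fubini, and substituting $y = \sa_{\Ct_j}\sk_2 x_0$, one obtains
$$\|\psi_j\|_{L^2}^2 = \int_{\SK\times\SK} \nu(\sk_1)\nu(\sk_2)\, M_\phi\bigl(\sa_{\Ct_j}\sk_1\sk_2^{-1}\sa_{\Ct_j}^{-1}\bigr)\,dm(\sk_1)\,dm(\sk_2),$$
where $M_\phi(g) := \int_{\SG/\Gamma}\phi(gy)\overline{\phi(y)}\,dy$. By Theorem~\ref{matrix coeff}, after approximating $\phi$ by smooth compactly supported or $\hat\SK$-invariant $L^2_0$ functions, the matrix coefficient satisfies
$$|M_\phi(g)| \;\le\; C_\phi \exp\Bigl(-\lambda \sum_{p\in S} d_p(g_p,1)\Bigr)$$
for some $\lambda > 0$. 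Splitting the domain of integration into the sets where $\sum_p d_p(\sa_{\Ct_j}\sk_1\sk_2^{-1}\sa_{\Ct_j}^{-1},1)$ is at most $r_j$ versus larger, Lemma~\ref{lemma 3.12} (applied with $\sk = \sk_1\sk_2^{-1}$) bounds the Haar measure of the small part by $C_S r_j^{|S_f|}\exp(r_j - \sum_p t_{p,j})$, while the complement contributes at most $\|\nu\|_\infty^2 \cdot C_\phi e^{-\lambda r_j}$. Choosing $r_j$ as a suitable small multiple of $\sum_p t_{p,j}$ balances the two terms and yields
$$\|\psi_j\|_{L^2}^2 \;\le\; C_{\phi,\nu}\, e^{-c\sum_p t_{p,j}}$$
for some $c>0$.

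Chebyshev's inequality then gives $\bigl|\{x_0 : |\psi_j(x_0)| > \delta\}\bigr| \le C\delta^{-2} e^{-c\sum_p t_{p,j}}$ for every $\delta>0$. Since $(\Ct_j)$ is divergent, $\sum_p t_{p,j} \to \infty$, so one can extract a subsequence $(\Ct_{j_k})$ along which these measures are summable in $k$. Borel--Cantelli then yields $\psi_{j_k}(x_0) \to 0$ for a.e.\ $x_0$; a standard subsequence-of-subsequence argument (applying the same bound to any subsequence of $(\Ct_j)$ and diagonalizing) upgrades this to a.e.\ convergence along the full sequence.

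The main technical obstacle is the approximation step needed to apply Theorem~\ref{matrix coeff}, since $\phi$ is neither compactly supported smooth nor $\hat\SK$-invariant a priori. The standard remedy is to write $\phi = \phi_N + (\phi - \phi_N)$ with $\phi_N = \phi\cdot\mathbf{1}_{\{\alpha_S \le N\}}$; since $\alpha_S^s \in L^2$, the tail $\phi - \phi_N$ vanishes in $L^2$ as $N\to\infty$, and its contribution to $\|\psi_j\|_2$ is controlled by Cauchy--Schwarz combined with the Siegel-transform bound of Lemma~\ref{Schmidt lemma}. The bounded function $\phi_N$ can then be smoothed by $\hat\SK$-convolution with an approximate identity to place it in the class covered by Theorem~\ref{matrix coeff}. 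The delicate part is choosing the cutoff parameter $N = N(j)$ slowly enough that the approximation error does not spoil the exponential decay of $\|\psi_j\|_2^2$; this balancing is the most intricate bookkeeping in the argument.
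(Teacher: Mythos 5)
Your variance computation, the split of $\SK$ into the set where $\sum_p d_p(\sa_{\Ct}\sk\sa_{\Ct}^{-1},1)$ is small (Lemma \ref{lemma 3.12}) versus its complement (Theorem \ref{matrix coeff}), and the resulting bound $\|A_{\Ct_j}\phi\|_2^2\lesssim e^{-c\,\hab(\Ct_j)}$ all match the core of the paper's argument. The genuine gap is the last step. Chebyshev plus Borel--Cantelli only works when $\sum_j e^{-c\,\hab(\Ct_j)}<\infty$, which an arbitrary divergent sequence need not satisfy (e.g.\ $\hab(\Ct_j)\sim\log\log j$); you acknowledge this and pass to a subsequence, but the proposed ``subsequence-of-subsequences and diagonalize'' upgrade is not valid. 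What the variance bound gives for the full sequence is convergence in measure, and the subsequence principle characterizes exactly that: every subsequence has a further a.e.\ convergent subsequence. This does \emph{not} imply a.e.\ convergence of the full sequence (the exceptional null set depends on the subsequence, and there are uncountably many; the standard ``typewriter'' sequence converges to $0$ in $L^2$ but nowhere pointwise). So as written the argument proves \eqref{eq prop 3.13} only along some subsequence of $(\Ct_j)$, not the statement of the proposition.

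The paper closes this gap with a maximal-type estimate: it bounds the measure of the set $E_{\Ct}$ of points $x$ for which the deviation exceeds $\varepsilon$ for \emph{some} $\Ct'\succ\Ct$, by $Ce^{-\lambda\hab(\Ct)}$; then the bad set for any divergent sequence lies in $\bigcap_j E_{\Ct_j}$, which is null, with no summability needed. Getting the supremum over the continuum of times $\Ct'$ is exactly what forces the paper's decomposition $\phi=\phi g_r+\phi(1-g_r)$: the cusp part is dominated by the nonnegative, left $\hat\SK$-invariant function $\psi=C_\phi r^{-\theta}\alpha_S^{s+\theta}$, for which $A_{\Ct+\Ct_\tau}\psi\le M\,A_{\Ct}\psi$ uniformly in $\tau\in[0,1]$, while the compactly supported part is uniformly continuous, so $A_{\Ct+\Ct_\tau}\phi_2$ varies by less than $\varepsilon/4$ for $\tau$ in a small interval; Chebyshev applied on a countable grid of times plus a geometric-series summation then controls the supremum. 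This decomposition also sidesteps your truncation-and-smoothing step with the $j$-dependent cutoff $N(j)$: Theorem \ref{matrix coeff} is applied directly to the $\hat\SK$-invariant function $\psi-\int\psi$ and to the compactly supported piece, so no delicate balancing of an approximation parameter against $\hab(\Ct_j)$ is required. To repair your proof you would need to add such a maximal inequality (or an equivalent continuity-in-$\Ct$ argument); the pointwise variance bound alone cannot yield the a.e.\ statement along every divergent sequence.
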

\begin{proof}
 Let $\varepsilon >0$ be arbitrary. For any function $f$ on $\SG/\Gamma$, define
 \[ A_{\Ct}f(x)=\int_{\SK} f(\sa_{\Ct} \sk x)\nu(\sk) dm(\sk).
 \]
 
 For $\Ct\succ 0$, let us denote by $\hab(\Ct)=\sum_{p\in S} t_p$.
 We claim that one can find $\Ct_0\succ 0$ and constants $C,\;\lambda>0$ such that for any $\Ct\succ \Ct_0$, 
 \begin{equation}\label{eq prop 3.13 (1)}\left|E_{\Ct}:= \left\{
 x \in \SG/\Gamma : 
\left|A_{{\Ct}'}\phi(x) - \int\phi \int \nu \right| > \varepsilon
\;\text{for some}\;\Ct'\succ \Ct\;\right\}\right| < Ce^{-\lambda \hab(\Ct)}.
\end{equation}

Then \eqref{eq prop 3.13 (1)} implies that
\[\left|\left\{x\in \SG/\Gamma :\left| \begin{array}{l}
\lim_{j \rightarrow \infty} \int_\SK \phi(\sa_{\Ct_j} \sk x_0)\nu(\sk) dm(\sk)\\
\hspace{1in}- \int_{\SG/\Gamma} \phi\; d\sg \int_\SK \nu\; dm \end{array}\right| > \varepsilon \right\}\right|\le\left|\bigcap^\infty_{j=1} E_{\Ct_j} \right|=0.
\]

Let $A(r)=\left\{ x \in \SG/\Gamma : \alpha^{}_S(x) > r \right\}$. Choose a smooth non-negative function $g_r : \SG/\Gamma \rightarrow [0,1]$ such that
\begin{enumerate}
\item $g_r$ is $\SK$-invariant, i.e., $g_r(\sk x)=g_r(x)$ for all $\sk \in \SK$;
\item $g_r(x)=1$ if $x$ is in $A(r+1)$ and $g_r(x)=0$ if $x$ is outside of $A(r)$. 
\end{enumerate}
 
Take $\phi_1=\phi g_r$ and $\phi_2=\phi (1-g_r)$. Then $\phi=\phi_1 + \phi_2$ and $\phi_2$ is compactly supported. 
Choose $\theta >0$ such that $1 \le s+\theta<n/2$. Then for any $x \in \SG/\Gamma$,
\[\begin{split}
\phi_1(x)&=\phi(x) g_r(x)\le C_\phi \alpha^s_S(x) g_r(x)=C_\phi \alpha^{-\theta}_S(x)\alpha^{s+\theta}_S(x)g_r(x) \\
&\le C_\phi r^{-\theta} \alpha^{s+\theta}_S(x).
\end{split}\]
Put $\psi=C_\phi r^{-\theta} \alpha^{s+\theta}_S(x)$. By the above inequalities and Proposition~\ref{lemma 3.10},
\begin{equation*}
|\phi_1|\le \psi \;\text{and}\; \psi \in \mathcal L^1(\SG/\Gamma)\cap\mathcal L^2(\SG/\Gamma).
\end{equation*}

Let $f\in \mathcal L^2_0(\SG/\Gamma)$ be either a smooth function of compact support or a left $\SK$-invariant function. 
Later, we will put $f=\phi_2-\int_\SK \phi_2$ or $\psi-\int_\SK \psi$.
By Theorem~\ref{matrix coeff}, there are $\lambda_f$, $C_f>0$ such that for any $\sg \in \SG$,
\begin{equation}\label{eq prop 3.13 (5)}
\left|\int_{\SG/\Gamma} f(\sg x)f(x)dx\right|\le C_f \exp\left(-\lambda_f \sum_{p \in S} d_p(g_p, 1)\right),
\end{equation}
where $\sg=(g_p)_{p\in S}$. For instance, we can take $\lambda_f=\min\{\lambda_p, p\in S\}$, where $\lambda_p$'s are in Proposition~\ref{matrix coeff}. Note that
\[ 
\begin{split}
\| A_{\Ct} f\|^2_2 &=\int_{\SG/\Gamma} \left(\int_{\SK} f(\sa_{\Ct}\sk x)\nu(\sk) dm(\sk)\right)^2 dx \\
&= \int_{\SG/\Gamma} \int_{\SK}\int_{\SK} f(\sa_{\Ct}\sk_1 x) f(\sa_{\Ct}\sk_2 x)\nu(\sk_1)\nu(\sk_2)dm(\sk_2)dm(\sk_1) dx \\
&=\int_{\SG/\Gamma} \int_{\SK}\int_{\SK} f(\sa^{}_{\Ct}\sk^{}_1 \sk^{-1}_2 \sa^{-1}_{\Ct} x) f(x)\nu(\sk_1)\nu(\sk_2)dm(\sk_2)dm(\sk_1) dx\\
&=\int_{\SK}\int_{\SG/\Gamma} f(\sa^{}_{\Ct}\sk \sa^{-1}_{\Ct} x) f(x) dx \int_{\SK}\nu(\sk \sk_2)\nu(\sk_2)dm(\sk_2)dm(\sk)\\
&=\int_{\SK}\left(\int_{\SG/\Gamma} f(\sa^{}_{\Ct} \sk \sa^{-1}_{\Ct}x)f(x) dx\right)(\nu*\hat\nu)(\sk) dm(k), 
\end{split}
\]
where $\hat\nu(\sk):=\nu(\sk^{-1})$, $\forall\sk \in \SK$. By Lemma~\ref{lemma 3.12}, 
\[ \left|\mathcal U:=\left\{\sk \in \SK : \sum_{p \in S} d_p(\sa^{}_{\Ct} \sk \sa^{-1}_{\Ct}, 1 ) \le \hab(\Ct) \right\}\right|
\le C_S \left({\hab(\Ct)} \right)^{|S_f|} \exp\left(-{\hab(\Ct)}\right).
\]

Hence by Cauchy-Schwartz inequality, we have that
\begin{equation}\label{eq prop 3.12 (6)} 
\begin{split}
&\left|\int_{\mathcal U}\int_{\SG/\Gamma} f(\sa^{}_{\Ct} \sk \sa^{-1}_{\Ct} x) f(x) dx (\nu *\hat \nu)(\sk) dm(\sk)\right|\\
&\hspace{1.5in}\le  \max(\nu *\hat \nu) \|f\|^2_2\; C_S \left( {\hab(\Ct)}\right)^{|S_f|} \exp\left(- {\hab(\Ct)} \right).
\end{split}
\end{equation}

It is deduced from \eqref{eq prop 3.13 (5)} that
\begin{equation}\label{eq prop 3.12 (7)}
\left|\int_{\SK-\mathcal U} \int_{\SG/\Gamma} f(\sa^{}_{\Ct}\sk\sa^{-1}_{\Ct}x)f(x) dx (\nu*\hat\nu)(\sk) dm(\sk)\right|
\le \max(\nu*\hat\nu)C_f \exp\left(-\lambda_f \hab(\Ct)\right).
\end{equation}

Hence by combining \eqref{eq prop 3.12 (6)} and \eqref{eq prop 3.12 (7)}, there are $\Ct^1_0\succ0$ and a constant $\lambda^1=\lambda^1(f, \Ct^1_0)>0$ such that for $\Ct\succ\Ct^1_0$,
\begin{equation*}\|A_{\Ct} f\|^2_2 \le \exp(-\lambda^1 \hab(\Ct)).
\end{equation*}

\vspace{0.2in}
Now let us show following inequalities:   
\begin{equation}\label{eq prop 3.13 (2)}
\left|\left\{x \in \SG/\Gamma : \left|A_{\Ct'}\phi_1 - \int_{\SG/\Gamma} \phi_1 \int_{\SK} \nu \right|> \frac {\varepsilon} 2 \;\text{for some}\; \Ct'\succ \Ct \right\}\right| < C_1 \exp(-\lambda_{\psi} \hab(\Ct)),
\end{equation}

\begin{equation}\label{eq prop 3.13 (3)}
\left|\left\{x \in \SG/\Gamma : \left|A_{\Ct'}\phi_2 - \int_{\SG/\Gamma} \phi_2 \int_{\SK} \nu \right|> \frac {\varepsilon} 2 \;\text{for some}\; \Ct'\succ \Ct \right\}\right| < C_2 \exp(-\lambda_{\phi_2} \hab(\Ct))
\end{equation}
for some positive constants $C_1$ and $C_2$. 
It is obvious that \eqref{eq prop 3.13 (2)} and \eqref{eq prop 3.13 (3)} imply \eqref{eq prop 3.13}.

First we note that $\alpha^{}_S(\sg x)/\alpha^{}_S(x) \le \max\{\prod_{p\in S}\|\wedge^i(\sg)\|_p : i=1, \ldots, n\}$. Let $\Ct_{\tau}=(\tau, 0, \ldots, 0)$. Then there is $M>0$ such that for all $x\in \SG/\Gamma$, for all $\tau \in [0,1]$, 
$\alpha^{s+\theta}_S(\sa_{\Ct_\tau} x)\le M\alpha_S^{s+\theta}(x)$. 
Hence we have
\[\psi (\sa_{\Ct_\tau} x) \le M \psi(x), \;\forall\tau\in[0,1].\]

Choose $r>0$ sufficiently large so that $\|\phi_1\|_1 \le \|\psi\|_1 \le \varepsilon/(8\max(\nu*\hat\nu)M)$.
Then
\[
\begin{split}
&\left\{x \in \SG/\Gamma : \left|A_{\Ct+\Ct_\tau}\phi_1 - \int \phi_1 \int \nu \right|> \frac \varepsilon 2\;\text{for some}\; \tau \in [0,1] \right\}\\
&\hspace{0.4in}\subseteq\left\{x \in \SG/\Gamma : \left|A_{\Ct+\Ct_\tau}\phi_1 \right|> \frac \varepsilon 4\;\text{for some}\; \tau \in [0,1] \right\}\\
&\hspace{0.4in}\subseteq\left\{x \in \SG/\Gamma : \left|A_{\Ct}\phi_1 \right|> \frac \varepsilon {4M} \; \right\}
\subseteq\left\{x \in \SG/\Gamma : \left|A_{\Ct}\psi \right|> \frac \varepsilon {4M} \; \right\}\\
&\hspace{0.4in}\subseteq\left\{x \in \SG/\Gamma : \left|A_{\Ct}\phi_1 - \int \psi \int \nu \right|> \frac \varepsilon {8M} \; \right\}.
\end{split}
\]

By taking $f=\psi - \int_{\SG/\Gamma} \psi$ in \eqref{eq prop 3.12 (7)} and using the Chebyshev's inequality, there are $\Ct^{\psi}_0\succ0$ and $\lambda_\psi>0$ such that for any $\Ct \succ \Ct^{\psi}_0$,
\[\begin{split}
&\left|E(\Ct, \Ct+\Ct_1)=\left\{x \in \SG/\Gamma : \left|A_{\Ct+\Ct_\tau}\phi_1 - \int \phi_1 \int \nu \right|> \frac \varepsilon 2\;\text{for some}\; \tau \in [0,1] \right\}\right|\\
&\hspace{0.8in} \le \left(\frac {8M} {\varepsilon} \right)^2 \exp (-\lambda_{\psi} \hab(\Ct)).
\end{split}\]

Hence using the geometric series argument, there is a constant $C_1>0$ such that the inequality~\eqref{eq prop 3.13 (2)} holds.
\[
\begin{split}
&\left|\left\{x \in \SG/\Gamma : \left|A_{\Ct'}\phi_1 - \int \phi_1 \int \nu \right|> \frac \varepsilon 2\;\text{for some}\; \Ct'\succ \Ct \right\}\right|\\
&\hspace{1in}\le \sum_{\scriptsize \begin{array}{cc}t'_\infty=t_\infty+n,\\
\forall n\in \NN\cup\{0\}\end{array}}\sum_{\scriptsize\begin{array}{cc}\forall t'_p\ge t_p,\\ p\in S_f\end{array}} \left|E(\Ct', \Ct'+\Ct_1) \right|
\le\; C_1 \exp(-\lambda_{\psi} \hab(\Ct)).
\end{split}
\]

For \eqref{eq prop 3.13 (3)}, note that since $\phi_2$ is compactly supported, $\phi_2$ is uniformly continuous. Hence there is $\delta>0$ such that
\[ \left|A_{\Ct+\Ct_\tau} \phi_2(x) - A_{\Ct} \phi_2(x)\right|<\frac {\varepsilon} 4 \]
for all $\Ct\succ 0$, $x\in \SG/\Gamma$ and $\tau\in[0,\delta]$.
Again by \eqref{eq prop 3.12 (7)} and Chebyshev's inequality, there are $\Ct^{\phi_2}_0\succ 0$ and $\lambda_{\phi_2}>0$ such that for all $\Ct \succ \Ct^{\phi_2}_0$,
\[
\begin{split}
&\left|\left\{x \in \SG/\Gamma : \left|A_{\Ct+\Ct_\tau}\phi_2 - \int_{\SG/\Gamma} \phi_2 \int_{\SK} \nu \right|> \frac {\varepsilon} 2 \;\text{for some}\; \tau \in [0, \delta] \right\}\right|\\
&\hspace{0.4in}\le \left|\left\{x \in \SG/\Gamma : \left|A_{\Ct}\phi_2 - \int_{\SG/\Gamma} \phi_2 \int_{\SK} \nu \right|> \frac {\varepsilon} 4 \; \right\}\right| \le \left(\frac 4 \varepsilon \right)^2 \exp(-\lambda_{\phi_2} \hab(\Ct)).
\end{split}
\]
Then \eqref{eq prop 3.13 (3)} follows from the similar geometric argument used above.
\end{proof}

From now on, a function $f_p$ on $\QQ_p^n$, $p \in S$, is always assumed to be compactly supported.  
If $p < \infty$, we additionally assume that $f_p$ is $(\ZZ_p-p\ZZ_p)$-invariant:
\begin{equation}\label{UFCf}
f_p(ux_1, x_2, \ldots, x_{n-1}, u^{-1}x_n)=f_p(x_1, x_2, \ldots, x_n),\;\forall u \in \ZZ_p -p\ZZ_p.
\end{equation}

Let $\nu$ be the product of non-negative continuous functions $\nu_p$ 
on the unit sphere in $\QQ_p^n$, $p \in S$. For $p\in S_f$, we also assume that
\begin{equation}\label{UFCnu}
\nu_p(u\ov)=\nu_p(\ov), \;\forall u \in \ZZ_p - p\ZZ_p.
\end{equation}

Define a function $J_f$ for $f=\prod_{p\in S}f_p$ by $J_f=\prod_{p\in S} J_{f_p}$, where
\[\begin{split}
J_{f_\infty}(r, \zeta_\infty)&= \frac 1 {r^{n-2}} \int_{\RR^{n-2}} f_\infty(r, x_2, \ldots, x_{n-1}, x_n) dx_2 \cdots dx_{n-1},\\
J_{f_p}(p^{-r},\zeta_p)&=\frac 1 {p^{r(n-2)}}\int_{\QQ_p^{n-2}} f_p(p^{-r},
x_2, \ldots, x_{n-1}, x_n) \ dx_2\cdots
dx_{n-1},\;p\in S_f,
\end{split}\]
where $x_n$ is determined by the equation $\zeta_p=q^0(p^{-r}, x_2, \ldots,x_{n-1}, x_n)$ (If $p=\infty$, replace $p^{-r}$ by $r$). 
By Lemma 3.6 in \cite{EMM} and Lemma 4.1 in \cite{HLM}, for sufficiently small $\varepsilon>0$, there are $c(K_p)>0$ and $t_p^0> 0$ for each $p \in S$, such that if $t_p>t^0_p$,
\begin{equation}\label{real Jf}
\begin{split}
&\left|c(K_\infty) e^{t_\infty(n-2)}\int_{K_\infty} \hspace{-0.1in}f_\infty(a_{t_\infty}k_\infty\ov)\nu(k^{-1}_\infty\ve_1)dm(k_\infty)\right.\\
&\hspace{1.7in} \left.-J_{f_\infty}(\|\ov\|_\infty e^{-t_\infty}, q^0_\infty(\ov))\nu(\frac{\ov}{\|\ov\|^\sigma_\infty})\right|<\varepsilon,
\end{split}\end{equation}
\begin{equation}\label{p-adic Jf}\begin{split}
&\left|c(K_p)p^{t_p(n-2)} \int_{K_p} f_p(a_{t_p}k_p\ov)\nu_p(k^{-1}_p \ve_1)dm(k_p)\right.\\
&\hspace{1.4in}-\left.J_{f_p}(p^{t_p}\|\ov\|_p^{\sigma}, q^0_p(\ov))\nu(\frac{\ov}{\|\ov\|_p^{\sigma}})\right|<\varepsilon,\; p \in S_f.
\end{split}\end{equation}

Recall that $\sigma=1$ for the infinite place and $\sigma=-1$ for the finite place.
Define
$$\|g\ov\|^\sigma / \CT^\sigma=(\|g_p\ov\|^\sigma_p/T^\sigma_p)_{p\in S}.$$

By Lemma 3.6 in \cite{EMM} and Lemma 4.1 in \cite{HLM}, there is a constant $c(\SK)>0$ such that for sufficiently small $\varepsilon>0$ and sufficiently large $\Ct \succ 0$, 
\begin{equation}\label{eq prop 3.6}
\begin{split}
&\left| J_f \left( \frac{\| \sg \ov \|^\sigma}{\T^\sigma}, \q( \sg \ov) \right) \nu \left( \frac{\sg\ov}{\| \sg \ov \|^\sigma} \right)\right.\\
&\hspace{1.5in}\left.-c(\SK)|\CT|^{n-2}\int_\SK
\tilde{f}(a_\t \sk \sg)\nu(\sk^{-1}\ve_1)dm(\sk)\right|\leq \varepsilon. \end{split}
\end{equation}

\begin{corollary}\label{upper bound} 
Let $(\CT_j=(T_{j,p})_{p\in S})_{j\in\NN}$ be a divergent sequence. Define
$$S'=\{p \in S : (T_{j,p})_{j\in \NN}\;\text{is bounded}\;\}\subsetneq S.$$

Let $\Omega, \I_S=(I_p)_{p\in S}$ be as in Theorem \ref{main thm}.
Then for almost all nondegenerate isotropic quadratic form $\q=(q_p)_{p\in S}$, 
there is a constant $C=C(S')>0$ such that
\begin{equation*}
\left|\left\{\ov \in \ZZ_S^n \cap \CT_j\Omega : \q(\ov)\in \I_S\right\}\right|< C \prod_{p\in S-S'}(T_{j,p})^{n-2},
\end{equation*}
where $n$ is the rank of $\q$.
\end{corollary}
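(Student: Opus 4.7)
\emph{Identification.} I would identify the form $\q$ with a coset $\SO(\sq)\sg$ via $\q=\sq\circ\sg$, so that ``almost every $\q$'' corresponds to almost every $x_0=\sg\Gamma \in \SG/\Gamma$ under the $\SL_n(\QQ_S)$--invariant measure. My plan is to dominate $\NT(\CT_j)$ by a $\SK$--averaged Siegel transform via \eqref{eq prop 3.6}, and then apply Proposition~\ref{prop 3.13} to bound that average uniformly in $j$.

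\emph{Majorization.} I would choose smooth nonnegative compactly supported $f=\prod_{p\in S}f_p$ (satisfying \eqref{UFCf}) and a nonnegative continuous $\nu=\prod_{p}\nu_p$ on the unit spheres (satisfying \eqref{UFCnu}) so that, using \eqref{real Jf}--\eqref{p-adic Jf},
\[
J_f\!\left(\tfrac{\|\sg\ov\|^\sigma}{\T^\sigma_j},\ \q(\sg\ov)\right)\nu\!\left(\tfrac{\sg\ov}{\|\sg\ov\|^\sigma}\right)\;\geq\;\chi^{}_{\CT_j\Omega\,\cap\,\q^{-1}(\I_S)}(\ov)
\]
for every $\ov\in\ZZ_S^n$ and every $j$. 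Because $\Omega$ and $\I_S$ are fixed, the supports of the $f_p$ and the values of $\nu_p$ can be arranged to cover the relevant normalized ranges independently of $\sg$ and $j$. Summing over $\ov\in\ZZ_S^n$ and invoking \eqref{eq prop 3.6} with $\varepsilon$ chosen sufficiently small, one obtains
\[
\NT(\CT_j)\;\leq\;\sum_{\ov\in\ZZ_S^n}J_f\cdot\nu\;\leq\;C\cdot c(\SK)\,|\CT_j|^{n-2}\int_{\SK}\tilde f(\sa_{\t_j}\sk\sg)\,\nu(\sk^{-1}\ve_1)\,dm(\sk)+O(1).
\]

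\emph{Equidistribution and reduction to $S'$.} By Lemma~\ref{Schmidt lemma} and Proposition~\ref{lemma 3.10}, $\tilde f\leq C_f\,\alpha^{}_S$ with $\alpha^{}_S\in\mathcal L^s$ for every $s<n$, so $\tilde f$ is dominated by some power $\alpha_S^s$ with $s<n/2$ (possible since $n\geq 3$), meeting the hypothesis of Proposition~\ref{prop 3.13}. Since $(\CT_j)$ is a divergent sequence, Proposition~\ref{prop 3.13} applied to $\phi=\tilde f$ gives, for almost every $x_0=\sg\Gamma$,
\[
\lim_{j\to\infty}\int_{\SK}\tilde f(\sa_{\t_j}\sk\sg)\,\nu(\sk^{-1}\ve_1)\,dm(\sk)=\int_{\SG/\Gamma}\tilde f\;d\sg\int_{\SK}\nu(\sk^{-1}\ve_1)\,dm(\sk)<\infty,
\]
so the inner integral is uniformly bounded in $j$ for a.e.\ $\q$, yielding $\NT(\CT_j)\leq C_1|\CT_j|^{n-2}$. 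Since $T_{j,p}\leq M_p$ for every $p\in S'$, we have
\[
|\CT_j|^{n-2}=\prod_{p\in S'}T_{j,p}^{n-2}\cdot\prod_{p\in S-S'}T_{j,p}^{n-2}\;\leq\;\Bigl(\prod_{p\in S'}M_p^{n-2}\Bigr)\prod_{p\in S-S'}T_{j,p}^{n-2},
\]
giving the claimed bound with a constant $C=C(S')$.

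\emph{Main obstacle.} The principal technical difficulty I anticipate is the uniform majorization step: constructing $f$ and $\nu$, independent of $\sg$ and $\CT_j$, whose product $J_f\cdot\nu$ dominates the indicator of $\CT_j\Omega\cap\q^{-1}(\I_S)$. This requires simultaneous compatibility between the convex shape $\Omega_p$, the scale of the $p$--adic interval $I_p$, and the level geometry of $q^0_p$, together with the invariance conditions \eqref{UFCf}--\eqref{UFCnu} at finite places. Once this majorization is in place, the equidistribution provided by Proposition~\ref{prop 3.13} supplies the uniform control essentially automatically.
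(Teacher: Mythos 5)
Your overall strategy (identify $\q$ with a.e.\ $\sg\Gamma$, dominate the count by a $\SK$-averaged Siegel transform, apply Proposition~\ref{prop 3.13} along the divergent sequence) is the right frame, but the step you yourself flag as the ``main obstacle'' is a genuine gap, and it is exactly where the corollary's content lies. First, \eqref{real Jf}, \eqref{p-adic Jf} and hence \eqref{eq prop 3.6} are only valid once \emph{every} component $t_p$ exceeds a threshold $t_p^0$; for $p\in S'$ the radii $T_{j,p}$ stay bounded by hypothesis (and the corollary is needed precisely when $S'\neq\emptyset$), so you cannot invoke \eqref{eq prop 3.6} along $(\CT_j)$ at all. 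Second, even ignoring that, the uniform pointwise majorization $J_f\cdot\nu\ge \mathbbm 1_{\CT_j\Omega\cap\q^{-1}(\I_S)}$ for all $\ov$ and all $j$ is impossible with compactly supported $f_p$: at a finite place the normalized first argument is $\|\sg\ov\|_p^{\sigma}/T_{j,p}^{\sigma}=T_{j,p}/\|\sg\ov\|_p$, which is unbounded as $\ov$ ranges over the small-norm vectors of $\CT_j\Omega$, so it leaves the compact support of $f_p$ and $J_{f_p}$ vanishes on most of the ball. This is why the counting must be done on shells $T_p/2<\|\ov\|_p\le T_p$ rather than on the full ball. Two further points you gloss over: summing the per-vector estimate \eqref{eq prop 3.6} over all of $\ZZ_S^n$ with an ``$O(1)$'' error needs the cardinality control of Proposition~\ref{prop 3.7}, and the constants $\beta_p$ relating $\|\sg_{\q}\ov\|_p$ to $\|\ov\|_p$ depend on $\q$, so the construction of $f$ must be made uniform only over a compact set $\mathcal C$ of forms (harmless for an a.e.\ statement, but it must be said).

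For comparison, the paper's proof splits the places. For $p\in S-S'$ it works on the shell $T_{j,p}/2<\|\ov\|_p\le T_{j,p}$ and chooses $f_p$ with $J_{f_p}\ge 1+\varepsilon$ on $[\tfrac{1}{2\beta_p},\beta_p]\times I_p$, so the $K_p$-average of $f_p$ times $T_{j,p}^{n-2}$ dominates the shell count by \eqref{real Jf}--\eqref{p-adic Jf}. For $p\in S'$ it abandons the $J_f$ machinery altogether and picks an auxiliary $g_p$ with $\int_{K_p}g_p(a_{t_p}k_p\ov)\,dm(k_p)\ge 1$ whenever $\|\ov\|_p\le T_p^0$ and $t_p\le\log_p T_p^0$; crucially, no factor $T_{j,p}^{n-2}$ is produced at these places, which is why the final bound involves only $\prod_{p\in S-S'}T_{j,p}^{n-2}$ (your last step, absorbing bounded factors $T_{j,p}^{n-2}$ for $p\in S'$, is only cosmetic once the main inequality is correct). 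The mixed function $h_{S'}=\prod_{p\in S'}g_p\times\prod_{p\in S-S'}f_p$ has Siegel transform bounded by $\alpha^{}_S$, and Proposition~\ref{prop 3.13} -- stated for general divergent sequences exactly because here the divergence occurs only in the $S-S'$ directions -- gives the uniform bound on $\int_\SK\widetilde{h_{S'}}(\sa_{\Ct_j}\sk\sg_{\q})\,dm(\sk)$ for a.e.\ $\q$; summing over the $p$-power (and dyadic) shells inside $\CT_j\Omega$ contributes only the geometric factor $\prod_{p\in S-S'}p/(p-1)$. Repairing your argument essentially requires reproducing this two-regime construction.
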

\begin{proof}
Since we want to show the upper bound, for simplicity,
we may assume that $\Omega$ is the product of unit balls in $\QQ_p^n$, $p \in S$.

Let $\varepsilon>0$ be given.
For each $p \in S$, let $\mathcal C_p$ be a compact set of the space of nondegenerate isotropic quadratic forms over $\QQ_p$ of a given signature.
Let $\mathcal C=\prod_{p \in S} \mathcal C_p$.

Let $\sg_{\q}$ be an element of $\SL_n(\QQ_S)$ such that $\q(\ov)=\sq(\sg_{\q}\ov)$ for all $\ov \in \QQ_S^n$.
Then for each $p \in S$, there is $\beta_p=\beta_p(\mathcal C_p)>0$ such that if $\q \in \mathcal C$, $\beta_p^{-1}\le \|\sg_{\q} \ov\|_p/\|\ov\|^\sigma_p \le \beta_p$ for all $\ov \in \QQ_S^n$.

Choose bounded continuous functions $f_p$, $p \in S$, of compact support such that 
\[J_{f_p}\ge 1+\varepsilon\quad\text{on}\quad [\frac 1 {2\beta_p}, \beta_p]\times I_p.
\]

If $\ov$ satisfies that $T_p/2 \le \|\ov\|_p \le T_p$ and $q_p(\ov)\in I_p$, then 
$$J_{f_p}(\|\sg\ov\|_p/T^{\sigma}_p, q^0_p(\sg\ov))\ge 1+\varepsilon$$ for $\sg=\sg_{\q}$ with $\q \in \mathcal C$.
By \eqref{real Jf} and \eqref{p-adic Jf}, there is $\CT^0=(T^0_p)_{p\in S}\succ 0$  
such that for each $p \in S$ and for all $T_p > T^0_p$,

\[\left|c(K_p)T_p^{n-2}\int_{K_p} f_p(a_{t_p}k_p \ov) dm(k_p)
- J_{f_p}\left(\frac{\|\ov\|_p^{\sigma}}{T^\sigma_p}, \zeta\right)\right|<\varepsilon.
\]

For $p \in S'$,
we may further assume that $T_{j,p}\le T^0_p$ for all $j$.

Finally for each $p \in S$, choose a nonnegative bounded function $g_p$ on $\QQ_p^n$ of compact support such that
if $\|\ov\|_p\le T^0_p$ and $t_p\le \log_p(T_p^0)$,
\[
\int_{K_p} g_p(a_{t_p} k_p \ov) dm(k_p) \ge 1.
\]

Take $h_{S'}=\prod_{p \in S'} g_p\times\prod_{p \in S-S'} f_p$ and
let $\widetilde{h_{S'}}$ be the Siegel transform of $h_{S'}$ defined in Lemma \ref{Schmidt lemma}.
Then for $\CT=(T_p)$ such that $T_p>T_p^0$, $\forall p \in S-S'$, we obtain that
\[\begin{split}
&\left|\left\{\ov \in \ZZ_S^n :\begin{array}{c}
\|\ov\|_p\le T_p^0, \;\forall p \in S'\\
T_p/2<\|\ov\|_p\le T_p,\;\forall p \in S-S'
\end{array}, \;\q(\ov)\in \I_S \right\}\right|\\
&\le \sum_{\ov \in \ZZ_S^n}\left( \prod_{p \in S'}\int_{K_p} g_p(a_{t_p}k_p\sg_{\q} \ov)dm(k_p)\times\hspace{-0.15in}\prod_{p\in S-S'}\hspace{-0.08in}T_p^{n-2}\int_{K_p} f_p(a_{t_p}k_p\sg_{\q}\ov)dm(k_p)\right)\\
&=\left(\prod_{p\in S-S'} T_p^{n-2}\right)\int_{\SK} \widetilde {h_{S'}} (\sa_{\Ct}\sk\sg_{\q} \ov)dm(\sk).
\end{split}\]

By Proposition \ref{prop 3.13}, for almost all quadratic form $\q$, as $\Ct$ diverges,
\[\int_{\SK} \widetilde{h_{S'}}(\sa_{\Ct}\sk \sg_{\q}\SL_n(\ZZ_S))dm(\sk)\rightarrow \int_{\SG/\Gamma} \widetilde{h_{S'}} d\sg <\infty.
\]

Hence there is a constant $C'>0$ such that for all $\CT_j$,
\[\left|\left\{\ov \in \ZZ_S^n :\hspace{-0.08in}\begin{array}{c}
\|\ov\|_p\le T_p^0, \;\forall p \in S'\\
T_{j,p}/2<\|\ov\|_p\le T_{j,p},\;\forall p \in S-S'
\end{array}\hspace{-0.05in}, \;\q(\ov)\in \I_S \right\}\right|< C'\hspace{-0.1in}\prod_{p\in S-S'} (T_{j,p})^{n-2}.
\]
Therefore we have 
\[\left|\ov \in \ZZ_S^n \cap \CT_j\Omega : \q(\ov)\in \I_S\}\right|
< \left( C'\hspace{-0.05in}\prod_{p \in S-S'}\frac {p}{p-1}\right)
\left(\prod_{p\in S-S'} T_{j,p}\right)^{n-2}. 
\]
\end{proof}

\section{The proof of the main theorem}

The proof of Theorem~\ref{main thm} is similar to that of Theorem 1.5 in \cite{HLM}
 except we use Proposition~\ref{prop 3.13} instead of Theorem 7.1 in \cite{HLM}. 
Let $\mathcal C$ be any compact subset of the space of isotropic quadratic forms with equal signature and let $\varepsilon>0$ be given. 
Let $\phi$ and $\nu$ be compactly supported functions defined as before.

Theorem 7.1 in \cite{HLM} says that there is $\Ct_0=\Ct_0(\mathcal C)\succ0$ such that
except on a finite union of orbits of $\SO(\sq)$, $x \in \mathcal C$ satisfies the following:
for all $\Ct \succ \Ct_0$,
 \[\left|\int_{\SK} \phi(\sa_{\Ct}\sk x)\nu(\sk)dm(\sk)-
 \int_{\SG/\Gamma} \phi d\sg \int_{\SK} \nu dm \right|<\varepsilon.
 \]

\begin{proposition}\label{prop 3.7} Let $\q$ be an isotropic quadratic form of rank $n=3$ or $4$. 
Let $f=\prod f_p$ and $\nu=\prod \nu_p$ be as before.
Assume further that $f$ satisfies the following condition:
there is a nonnegative continuous function $f^{+}$ of compact support on $\QQ_S^n$ such that $\supp(f)\subset \supp(f^+)^\circ$, where $A^\circ$ is an interior of $A$ and
\begin{equation}\label{eq prop 3.7 (3)}
\sup_{\Ct \succ 0} \int_{\SK} \widetilde{f^+} (\sa_{\Ct} \sk \sg\Gamma) dm(\sk) = M < \infty.
\end{equation}

Then there exists $\t_0\succ 0$ such that if $\t \succ \t_0$,
\begin{equation}\label{eq prop 3.7}
\begin{split}
&\left| |\T|^{-(n-2)}\sum_{\ov\in\ZZ_S^n} 
J_f \left( \frac{\| \sg \ov \|^\sigma}{\T^\sigma}, \q( \sg \ov) \right) \nu \left( \frac{\sg\ov}{\| \sg \ov \|^\sigma} \right)\right.\\
&\hspace{2in}\left.-c(\SK)\int_\SK
\tilde{f}(a_\t \sk \sg)\nu(\sk^{-1}\ve_1)dm(\sk)\right|\leq \epsilon. 
\end{split}
\end{equation}
\end{proposition}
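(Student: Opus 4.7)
The plan is to unfold the Siegel transform on the right-hand side of \eqref{eq prop 3.7}, apply the pointwise comparison \eqref{real Jf}--\eqref{p-adic Jf} to each lattice vector, and use the hypothesis \eqref{eq prop 3.7 (3)} as an a priori count bound. The strategy parallels that of \cite[Proposition 3.7]{HLM}, with the role of the equidistribution input there being played by the pointwise Jacobian estimates.

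First I would use the definition of the Siegel transform together with Fubini to write
\[
c(\SK)\int_\SK \tilde f(\sa_\t\sk\sg)\nu(\sk^{-1}\ve_1)\,dm(\sk) = c(\SK)\sum_{\ov\in\ZZ_S^n}\int_\SK f(\sa_\t\sk\sg\ov)\nu(\sk^{-1}\ve_1)\,dm(\sk),
\]
and observe that the compact support of $f$ forces the sum to be finite; denote the contributing set of vectors by $\mathscr V(\t)$. For each $\ov\in\mathscr V(\t)$, multiplying together the estimates \eqref{real Jf} and \eqref{p-adic Jf} applied with $\ov$ replaced by $\sg\ov$ yields
\[
\left| c(\SK)|\T|^{n-2}\int_\SK f(\sa_\t\sk\sg\ov)\nu(\sk^{-1}\ve_1)\,dm(\sk) - J_f\!\left(\tfrac{\|\sg\ov\|^\sigma}{\T^\sigma},\q(\sg\ov)\right)\nu\!\left(\tfrac{\sg\ov}{\|\sg\ov\|^\sigma}\right)\right| \le \varepsilon'(\t),
\]
where $\varepsilon'(\t)\to 0$ as every component of $\t$ tends to infinity. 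Summing this inequality over $\mathscr V(\t)$ and dividing by $|\T|^{n-2}$ reduces \eqref{eq prop 3.7} to establishing a bound $\#\mathscr V(\t)=O(|\T|^{n-2})$ uniformly in $\t\succ\t_0$.

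To obtain the count bound I would exploit \eqref{eq prop 3.7 (3)}. Pick a continuous non-negative function $\chi$ with $\mathbf{1}_{\supp(f)}\le\chi\le f^+$; such $\chi$ exists because $\supp(f)$ is compact and contained in $\supp(f^+)^\circ$. Then $\widetilde\chi\le\widetilde{f^+}$, so \eqref{eq prop 3.7 (3)} gives $\int_\SK\widetilde\chi(\sa_\t\sk\sg)\,dm(\sk)\le M$. Applying the pointwise comparison to $\chi$ with $\nu\equiv 1$ and summing over $\ov$ produces
\[
\sum_{\ov\in\mathscr V(\t)} J_\chi\!\left(\tfrac{\|\sg\ov\|^\sigma}{\T^\sigma},\q(\sg\ov)\right) \le c(\SK)M|\T|^{n-2} + \varepsilon'(\t)\,\#\mathscr V(\t).
\]
One can choose $\chi$ so that in addition $J_\chi\ge 1$ on the support of $J_f$ (by enlarging the support of $\chi$ slightly while keeping it inside $\supp(f^+)^\circ$). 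The left-hand side is then at least $\#\mathscr V(\t)$, and solving the inequality for $\varepsilon'(\t)<1/2$ yields $\#\mathscr V(\t)\le 2c(\SK)M|\T|^{n-2}$. Combining with the previous paragraph and choosing $\t_0$ large enough that $\varepsilon'(\t)\cdot 2c(\SK)M<\epsilon$ concludes the argument.

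The main technical obstacle is this final count bound, namely translating the $\SK$-averaged Siegel-transform hypothesis \eqref{eq prop 3.7 (3)} into a purely combinatorial estimate of order $|\T|^{n-2}$ on $\#\mathscr V(\t)$. The saving of two powers of $|\T|$ over the naive count $|\T|^n$ encodes the codimension-two constraint built into $\supp(J_f)$—one from the norm condition, one from the fixed value of $\q$—and matching this scale is precisely what forces the auxiliary $\chi$ to be chosen so that its $J$-transform majorizes the indicator of $\supp(J_f)$.
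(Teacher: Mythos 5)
Your overall strategy coincides with the paper's: unfold the Siegel transform, apply the pointwise Jacobian comparison \eqref{real Jf}--\eqref{p-adic Jf} (i.e.\ \eqref{eq prop 3.6}) vector by vector, and convert hypothesis \eqref{eq prop 3.7 (3)} into a count of order $|\T|^{n-2}$ for the contributing lattice vectors. However, the counting step --- which you correctly identify as the crux --- has genuine gaps as written. First, a continuous $\chi$ with $\mathbf{1}_{\supp(f)}\le\chi\le f^+$ need not exist: the hypothesis only gives $\supp(f)\subset\supp(f^+)^\circ$, so $f^+$ may be smaller than $1$ (it may even vanish) at points of $\supp(f)$; and the further requirement $J_\chi\ge 1$ on $\supp(J_f)$ together with $\chi\le f^+$ would force $J_{f^+}\ge 1$ there, which is not part of the hypothesis. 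Second, and more seriously, the inequality $\sum_{\ov\in\mathscr{V}(\t)}J_\chi\ge\#\mathscr{V}(\t)$ requires $J_\chi\ge 1$ at the points $\left(\|\sg\ov\|^\sigma/\T^\sigma,\q(\sg\ov)\right)$ for every $\ov\in\mathscr{V}(\t)$; but membership in $\mathscr{V}(\t)$ only says that $\sa_{\Ct}\sk\sg\ov\in\supp(f)$ for some $\sk$, and this does not place the point $\left(\|\sg\ov\|^\sigma/\T^\sigma,\q(\sg\ov)\right)$ in $\supp(J_f)$, the set on which you arranged $J_\chi\ge1$. Making that implication precise is essentially another instance of the Jacobian comparison, and for vectors whose $\SK$-integral is tiny the argument becomes circular.

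The paper avoids both problems by counting the other set, $\Pi=\{\ov\in\ZZ_S^n : J_f(\|\sg\ov\|^\sigma/\T^\sigma,\q(\sg\ov))\,\nu(\sg\ov/\|\sg\ov\|^\sigma)\neq 0\}$: membership in $\Pi$ directly produces a point of $\supp(J_f)$, on which $J_{f^+}>\rho$ for some $\rho>0$ (this uses only $\supp(f)\subset\supp(f^+)^\circ$, with no pointwise minorization of $f^+$); then the per-vector estimate applied to $f^+$, the nonnegativity of $f^+$, and \eqref{eq prop 3.7 (3)} give $\rho\,|\Pi|\le c(\SK)M|\CT|^{n-2}+\varepsilon|\Pi|$, hence $|\Pi|=O(|\CT|^{n-2})$. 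Note also that your reduction sums the comparison only over $\mathscr{V}(\t)$ and thereby drops the vectors where $J_f\nu\neq 0$ but the $\SK$-integral of $f$ vanishes; their individually small contributions to the left-hand sum still need to be counted, and it is exactly this set, not $\mathscr{V}(\t)$, that the paper controls. To repair your argument, count $\Pi$ as the paper does (or a fixed compact enlargement containing both sets for $\t\succ\t_0$) rather than the Siegel-transform-side support $\mathscr{V}(\t)$.
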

\begin{proof} We first claim that there is a constant $c>0$ such that 
\begin{equation}\label{eq prop 3.7 (2)}
\left|\Pi:=\left\{\ov \in \ZZ^n_S : J_f \left( \frac{\| \sg \ov \|^\sigma}{\T^\sigma}, \q( \sg \ov) \right) \nu \left( \frac{\sg\ov}{\| \sg \ov \|^\sigma} \right)\neq 0\right\}\right|< c |\CT|^{n-2}.
\end{equation}

Since the interior of $\supp(f^+)$ contains $\supp(f)$, there is $\rho>0$ such that
\[ J_{f^+} > \rho \quad \text{on}\; \supp (J_f).
\]

Note that since $J_{f^+}$ is compactly supported, the set $\Pi$ is finite.
Take $\varepsilon>0$ such that $\varepsilon < \rho/2$. Then by \eqref{eq prop 3.6}, for sufficiently large $\Ct$, we have that
\[\rho|\Pi|\le\sum_{\ov \in \Pi} J_{f^+}\left( \frac{\| \sg \ov \|^\sigma}{\T^\sigma}, \q( \sg \ov) \right)
\le c(\SK) |\CT|^{n-2}\int_{\SK} \widetilde{f^+}(\sa_\Ct\sk\sg \Gamma)dm(\sk)+\varepsilon|\Pi|.
\]

By \eqref{eq prop 3.7 (3)}, $\rho/2\; |\Pi| \le c(\SK)M|\CT|^{n-2}$. This implies \eqref{eq prop 3.7 (2)}.

Now \eqref{eq prop 3.7} follows from applying \eqref{eq prop 3.6} by putting $\varepsilon/(c(\SK)\rho M)$ instead of $\varepsilon$ and taking summation over all $\ov \in \ZZ_S^n$.
\end{proof}

Recall that a convex set $\Omega$ is defined using a non-negative continuous function $\rho=\prod_{p\in S} \rho_p$, 
where $\rho_p$ is a positive function on the unit sphere of $\QQ_p^n$, $p \in S$.

Define the shell $\hat\Omega$ of $\Omega$ by
\[\hat\Omega= \left\{\ov \in \QQ_S^n : \rho_p(\ov/\|\ov\|_p^\sigma)/2 < \|\ov\|_p \le \rho_p(\ov/\|\ov\|_p^\sigma), \;\forall p \in S \right\}. 
\]

Note that when $p<\infty$, 
the inequality in the above definition is in fact the equality: $\|\ov\|_p=\rho_p(\ov/\|\ov\|^\sigma_p)$. 

The following proposition was originally stated for $\Omega$ but the proof can be easily modified for $\hat \Omega$.

\begin{proposition}\cite[Proposition 1.2]{HLM}\label{volume-asym}
 There is a constant $\lambda=\lambda(\q,\Omega)>0$ such that as $\CT \rightarrow \infty$,
\[\vol\left\{\ov \in \QQ_S^n\cap \CT\hat\Omega : \q(\ov)\in \I_S \right\}\sim \lambda(\q,\hat\Omega) \cdot | \I_S | \cdot |\CT|^{n-2}.
\]
\end{proposition}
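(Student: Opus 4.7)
The plan is to adapt the proof of \cite[Proposition 1.2]{HLM} directly, exploiting the fact that the shell $\hat\Omega$ has the same $\CT$-scaling behaviour as $\Omega$ itself; the analysis is essentially local at each place.

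First I would apply the change of variables $\ow = \sg_\q \ov$, where $\sg_\q \in \SL_n(\QQ_S)$ satisfies $\q(\ov) = \sq(\sg_\q \ov)$. Since $\sg_\q$ is unimodular and scalar dilation by $\CT$ at each place commutes with $\sg_\q$, the quantity to be estimated becomes
\[
\vol\left\{\ow \in \QQ_S^n \cap \CT(\sg_\q\hat\Omega) : \sq(\ow) \in \I_S \right\},
\]
so it suffices to work with the standard form $\sq$ on the transformed star-shaped region $\sg_\q\hat\Omega$.

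Second, at each place $p \in S$ I would pass to polar-type coordinates $(r_p, \xi_p, \zeta_p)$, where $r_p = \|\ow\|_p$, $\xi_p$ is the corresponding angular direction on the unit sphere in $\QQ_p^n$, and $\zeta_p = q^0_p(\ow)$. The Jacobian contributes a factor $r_p^{n-2}$ at each place---precisely the normalisation already built into the functions $J_{f_p}$ in \eqref{real Jf}--\eqref{p-adic Jf}. For fixed $\xi_p$, the shell condition restricts $r_p$ to the interval $(T_p\rho_p(\xi_p)/2,\;T_p\rho_p(\xi_p)]$ at $p = \infty$ and to the unique discrete value $r_p = T_p\rho_p(\xi_p)$ at $p < \infty$. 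Applying Fubini, the $\zeta_p$-integral over $I_p$ produces the factor $\mu_p(I_p)$, while the radial and angular integrations produce a positive constant depending only on $\rho_p$ and $q^0_p$. Multiplying the resulting $T_p^{n-2}\mu_p(I_p)$ times the angular constants across $p \in S$ yields the claimed asymptotic with $\lambda(\q,\hat\Omega) > 0$.

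The main subtlety lies at the finite places, where the ultrametric structure requires care both in setting up the polar decomposition and in verifying that the $\zeta_p$-integral over the Haar-cylinder $I_p = a + p^b\ZZ_p$ produces the clean factor $\mu_p(I_p)$ uniformly in $r_p$; at finite places the ``shell'' collapses to a single sphere of prescribed radius, which is actually a simplification rather than a complication. These verifications are already carried out in \cite{HLM} for the full domain $\Omega$, and the only modification required for $\hat\Omega$ is restricting the radial variable at each place to the outer shell, which alters the value of $\lambda(\q,\hat\Omega)$ but not the $|\CT|^{n-2}\mu(\I_S)$ asymptotic order.
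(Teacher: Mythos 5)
Your proposal matches what the paper actually does here: it gives no independent proof of this proposition, citing \cite[Proposition 1.2]{HLM} and only remarking that the argument for $\Omega$ carries over to the shell $\hat\Omega$, which is exactly the reduction you describe (passing to the standard form $\sq$, noting that the shell merely restricts the radial range at each place---pinning the norm to a single value at finite places---so that only the constant $\lambda(\q,\hat\Omega)$ changes, not the order $\mu(\I_S)\,|\CT|^{n-2}$). Your coordinate bookkeeping is stated loosely (the triple $(r_p,\xi_p,\zeta_p)$ with $\xi_p$ a full unit direction is overdetermined, and the growth $T_p^{n-2}$ comes from the $(n-2)$-dimensional transverse integration built into $J_{f_p}$ rather than from a radial Jacobian factor $r_p^{n-2}$), but since you defer those local computations to \cite{HLM} exactly as the paper does, the approach is essentially the same.
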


\begin{lemma}\cite[Lemma 5.2]{HLM}
Let $f$ and $\nu$ be as in Proposition \ref{prop 3.7}.
Take
\[
h_p( \ov_p, \zeta_p)=J_{f_p}(\|\ov_p\|_p^{\sigma}, \zeta_p)\nu_p(\ov_p/\|\ov_p\|_p^{\sigma}),\quad p \in S
\]
and set $h(\ov, \zeta)= \prod_{p \in S}  h_p( \ov_p, \zeta_p)$. 
Then we have
\begin{equation}\label{eq lemma 3.9}
\begin{split}
&\lim_{\CT \rightarrow \infty} |\CT|^{-(n-2)} \int_{\QQ_S^n} h\left(\frac \ov {\CT^\sigma}, \q(\ov)\right) d\ov\\
&\hspace{2in}= c(\SK)
 \int_{\SG/\Gamma}\tilde{f}(\sg)\, d\sg  \prod_{p \in S} \int_{K_p}
 \nu_p(k_p^{-1}\ve_1)dm(k_p).
\end{split}\end{equation}
\end{lemma}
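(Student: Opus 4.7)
The plan is to split the integral over $\QQ_S^n$ into local factors by Fubini, carry out a polar-type asymptotic in each place, and combine with the Siegel identity. Because $h$ factors as $\prod_p h_p$, $\q$ as $(q_p)_p$, and $d\ov = \prod_p dv_p$, we have $\int_{\QQ_S^n} h(\ov/\CT^\sigma, \q(\ov))\,d\ov = \prod_p I_p(T_p)$ with
\[
I_p(T_p) = \int_{\QQ_p^n} J_{f_p}\!\left(\|v\|_p^\sigma/T_p^\sigma,\, q_p(v)\right) \nu_p(v/\|v\|_p^\sigma)\, dv.
\]
An $\SL_n(\QQ_S)$-change of variables reduces $\q$ to the standard form $\sq$, so that $q_p = q_p^0$ and $J_{f_p}$ is evaluated at its natural argument. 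Using $c(\SK) = \prod_p c(K_p)$ and the Siegel identity $\int_{\SG/\Gamma}\tilde f\,d\sg = \prod_p \int f_p\,dv_p$, the lemma reduces to the single-place statement
\[
T_p^{-(n-2)}\, I_p(T_p) \longrightarrow c(K_p) \int_{\QQ_p^n} f_p\, dv \int_{K_p}\nu_p(k^{-1}\ve_1)\, dm(k), \qquad T_p \to \infty.
\]

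For $p = \infty$ I would pass to polar coordinates $v = R\omega$ with $R = \|v\|$ and $\omega \in S^{n-1}$, substitute $R = T_\infty s$, and use the explicit formula for $J_{f_\infty}$ to pull out a factor $T_\infty^{n-2}$. Because $J_{f_\infty}$ is compactly supported in both arguments, $\omega$ is forced onto the null cone $\{q_\infty^0 = 0\}\cap S^{n-1}$ to order $T_\infty^{-2}$; pushing $\nu_\infty\,d\omega$ forward along $\omega \mapsto q_\infty^0(\omega)$ by the coarea formula gives a density $g_\infty(\zeta)\,d\zeta$ continuous at $\zeta=0$, and a second substitution $\eta = T_\infty^2 s^2 \zeta$ absorbs the $T_\infty$-dependence into $g_\infty$. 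Dominated convergence then yields $T_\infty^{-(n-2)} I_\infty(T_\infty) \to 2 g_\infty(0) \int f_\infty\,dv$, the factor $2$ coming from unfolding $J_{f_\infty}$ against its defining Jacobian. The non-archimedean case is parallel: the polar decomposition is replaced by the shell partition $\QQ_p^n \setminus \{0\} = \bigsqcup_{m\in\ZZ} p^m(\ZZ_p^n \setminus p\ZZ_p^n)$, the radial integral becomes a geometric series indexed by $m$, and the coarea becomes a summation of the $K_p$-invariant measure on the compact unit sphere along the fibers of $q_p^0$; the tree and building structure of $K_p\setminus \SO(q_p^0)$ developed in Section~2 makes this combinatorics explicit.

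The main obstacle is identifying $2 g_p(0)$ with $c(K_p) \int_{K_p}\nu_p(k^{-1}\ve_1)\,dm(k)$. Both expressions are continuous $K_p$-invariant positive linear functionals of $\nu_p$ supported on the orbit $K_p\cdot \ve_1 \subseteq \{q_p^0 = 0\}\cap S^{n-1}$; for isotropic $q_p^0$ this orbit fills the null cone on the unit sphere up to the stabilizer of $\ve_1$, so the two functionals coincide up to an absolute scalar. Testing the defining relations \eqref{real Jf} and \eqref{p-adic Jf} against $\nu_p\equiv 1$ pins this scalar down to $c(K_p)$. Note that a cruder path through integrating \eqref{eq prop 3.6} directly against $d\ov$ fails, because its pointwise error times the support volume of order $|\CT|^n$ swamps the $|\CT|^{n-2}$-gain; the polar asymptotic is exactly what recovers the missing factor $|\CT|^{-2}$ via the concentration of $q_p^0(\omega)$ on the null cone.
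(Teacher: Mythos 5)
Your overall skeleton (factor over places, local asymptotics, Siegel's formula) is workable in principle, but the step you dismiss at the end is in fact the proof: the paper does not reprove this lemma but cites \cite[Lemma 5.2]{HLM}, whose argument is exactly the ``cruder path'' of integrating the uniform approximations \eqref{real Jf}, \eqref{p-adic Jf} (equivalently \eqref{eq prop 3.6}) in $\ov$ over $\QQ_S^n$, using $\det(a_{t_p}k_p)=1$ to get $\prod_p c(K_p)T_p^{n-2}\int f_p\,dv\int_{K_p}\nu_p(k_p^{-1}\ve_1)\,dm$, and then converting $\int_{\QQ_S^n}f$ into $\int_{\SG/\Gamma}\tilde f\,d\sg$ by the Siegel integral formula. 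Your reason for rejecting it is a miscount: the function $\ov\mapsto J_f(\|\ov\|^\sigma/\CT^\sigma,\q(\ov))\nu(\ov/\|\ov\|^\sigma)$ is supported where $\|\ov\|_p\le CT_p$ \emph{and} $\q(\ov)$ lies in a fixed compact set, because $J_{f_p}(r,\zeta)$ vanishes unless $(r,\zeta)$ lies in the compact image of $\supp f_p$ under $w\mapsto(w_1,q^0_p(w))$; this region has volume $O(|\CT|^{n-2})$, not $O(|\CT|^{n})$, so the pointwise error $\varepsilon$ integrates to $\varepsilon\,O(|\CT|^{n-2})$ and survives the division by $|\CT|^{n-2}$. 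Thus the direct route both works and yields the constant $c(\SK)=\prod_p c(K_p)$ for free, since it integrates the very relations that define that constant; your polar/coarea detour is unnecessary and is where your difficulties are created.

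Within your own route there is a genuine gap at the constant identification. You argue that $2g_p(0)$ and $c(K_p)\int_{K_p}\nu_p(k^{-1}\ve_1)\,dm$ must be proportional because both are $K_p$-invariant positive functionals of $\nu_p$ ``supported on the orbit $K_p\cdot\ve_1$''; but the coarea density $g_p(0)$ integrates $\nu_p$ over the whole unit null variety $\{q^0_p=0,\ \|\omega\|_p=1\}$, and this set is in general \emph{not} a single $K_p$-orbit. Already for $q^0_\infty$ of signature $(2,1)$ it consists of two circles (forward and backward cone) interchanged by $\omega\mapsto-\omega$, on which $K_\infty\cong\SO(2)$ acts with two orbits; similar multi-orbit phenomena occur for $(3,1)$, $(2,2)$ and at the finite places. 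Consequently the space of $K_p$-invariant functionals on the unit null vectors has dimension equal to the number of orbits, uniqueness up to one scalar fails, and testing against $\nu_p\equiv1$ cannot pin down the functional; your ad hoc factor $2$ does not repair this for general $\nu_p$, so the asserted limit would have to be re-derived with the orbit bookkeeping that \eqref{real Jf} and \eqref{p-adic Jf} already encode. Two smaller points: the ``$\SL_n(\QQ_S)$-change of variables reducing $\q$ to $\sq$'' also changes $\|\ov\|_p$ inside $J_{f_p}$ and $\nu_p$, so it does not put the integrand in the claimed natural form; and the identity $\int_{\SG/\Gamma}\tilde f\,d\sg=\prod_p\int f_p\,dv_p$ requires fixing the normalization of $d\sg$ and dealing with the vector $\ov=0$ in the definition of $\tilde f$.
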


\vspace{0.2in}
\begin{proof}[Proof of Theorem~\ref{main thm}]
Since $f$ in Proposition \ref{prop 3.7} is compactly supported,
there is $f^+$ such that $\supp(f_p)\subset \supp(f^+_p)^\circ$. 
By Proposition~\ref{prop 3.13} with $\phi=f^+$ and $\nu\equiv1$,
\[\lim_{\Ct\rightarrow \infty}\int_{\SK} \widetilde{f^+}(\sa_\Ct \sk \sg \Gamma)dm(\sk)=\int_{\SG/\Gamma}\widetilde{f^+} d\sg
\]
for almost all $x=\sg\Gamma \in \SG/\Gamma$.
By Lemma \ref{Schmidt lemma} and Proposition \ref{lemma 3.10}, the integral of $\widetilde{f^+}$ over $\SG/\Gamma$ is finite.
Hence one can apply Proposition \ref{prop 3.7} for almost all $x \in \SG/\Gamma$.

We also remark that the set of functions of the form $J_f \nu$, where $f=\prod f_p$, $\nu=\prod \nu_p$ with \eqref{UFCf} and \eqref{UFCnu} respectively, is a generating set of 
$$\mathcal L=\{F(\ov, \zeta) : \left(\QQ_S^n\right) \times \QQ_S\rightarrow \RR\; | \;F(u\ov, \zeta)=F(\ov, \zeta),\; \forall u \in \ZZ_p - p\ZZ_p,\; \forall p \in S_f\}.$$

Hence Proposition~\ref{prop 3.7} holds for functions in $\mathcal L$ as well (see details in \cite{HLM}). Define
\[L(h):=\lim_{\CT \rightarrow \infty} |\CT|^{-(n-2)} \int_{\QQ_S^n} h\left(\frac \ov {\CT^\sigma}, \q(\ov)\right) d\ov,\; h \in \mathcal L.
\]

The characteristic function $\mathbbm 1_{\CT\hat\Omega\times\I_S}(\sg\ov,\zeta)$ is contained in $\mathcal L$.
Let $\varepsilon>0$ be given. 
Take continuous functions $h^a, h^b \in \mathcal L$, depending on $\CT$, $\varepsilon$ and $\q$ such that
\begin{equation}\label{eq last (3)}
h^b(\sg \ov, \zeta) \le \mathbbm 1_{\CT\hat\Omega\times\I_S}(\sg\ov,\zeta) \le h^a(\sg \ov, \zeta)\quad\text{and}\quad
\left|L( h^a) -L( h^b) \right| < \varepsilon.
\end{equation}

From \eqref{eq prop 3.7}, \eqref{eq prop 3.13} and \eqref{eq lemma 3.9}, 
for $h=h^a$, $h^b$ and any $\varepsilon>0$, there is $\CT_0 \succ 0$ such that if $\CT \succ \CT_0$,
\begin{equation}\label{eq last (1)}
\left| |\CT|^{-(n-2)} \sum_{\ov \in \ZZ^n_S} h\left( \frac {\sg \ov}{\CT^\sigma}, \q(\sg \ov)\right) - L(h) \right| < \varepsilon
\end{equation}
for almost every $\sg\Gamma\in \SG/\Gamma$.
By the definition of $L(h)$ and rescaling $\CT_0 \succ 0$ if necessary, we also obtain that
\begin{equation}\label{eq last (2)}
\left| |\CT|^{-(n-2)} \int_{\QQ_S^n}h\left( \frac {\sg \ov}{\CT^\sigma}, \q(\sg \ov)\right)  -L(h)\right|<\varepsilon.
\end{equation}

Combining \eqref{eq last (1)} and \eqref{eq last (2)} with \eqref{eq last (3)}, if we regard $h$ as the characteristic function of $\CT\hat\Omega\times \I_S$,
\begin{equation*}
\left|\;\left|\left\{\ov \in \ZZ_S^n \cap \CT\hat\Omega : \q(\ov) \in \I_S  \right\}\right| - \vol\left(\left\{\ov \in \QQ_S^n \cap \CT\hat\Omega : \q(\ov) \in \I_S \right\}\right)\right|<4\varepsilon.
\end{equation*}

Since
\[\begin{split}&\left|\left\{\ov \in \ZZ_S^n \cap \CT \Omega : \q(\ov) \in \I_S \right\}\right|\\
&\hspace{0.4in}=
\hspace{-0.2in}\sum_{\scriptsize \begin{array}{c} n_j\in \{0\}\cup\NN\\j\in\{0,\ldots, s\}\end{array}}\hspace{-0.1in}
\left|\left\{\ov \in \ZZ_S^n \cap (2^{-n_0}T_\infty,  p^{-n_1}_1 T_1, \ldots, p^{-n_s}_s T_s)\hat \Omega : \q(\ov) \in \I_S \right\}\right|,
\end{split}\]
the lemma follows from Proposition~\ref{volume-asym} and the classical argument of geometric series, if we obtain the following: as $\CT \rightarrow \infty$, 
the summation over all $\CS=(S_\infty, S_1, \ldots, S_s)=(2^{-n_0}T_\infty,  p^{-n_1}_1 T_1, \ldots, p^{-n_s}_s T_s)$ with $\CS \nsucc \CT_0=(T^0_\infty, T^0_1, \ldots, T^0_s)$ is 
\[\sum_{\CS}\left|\left\{\ov \in \ZZ_S^n \cap \CS\hat\Omega : \q(\ov) \in \I_S  \right\}\right|=o(|\CT|^{n-2})\quad\text{as}\quad \CT\rightarrow \infty.
\]

For this, by rescaling $\CT^0$ if necessary, let us assume that Corollary \ref{upper bound} holds for any $S'\subsetneq S$. 
Denote the set $\{\CS=(2^{-n_0}T_\infty,  p^{-n_1}_1 T_1, \ldots, p^{-n_s}_s T_s) \prec \CT : \CS \nsucc \CT_0\}$ by $\cup\{\Psi_{S'}: S'\subseteq S\}$, where
\[\Psi_{S'}:=\left\{\CS=(S_\infty, S_1, \ldots, S_s) : 
S_p \le T^0_p,\;\forall p\in S'\;\text{and}\;
S_p > T^0_p,\;\forall p\in S-S'\right\}.
\]

Then for $S'\subsetneq S$, by Corollary \ref{upper bound}, there is a constant $C(S')>0$ such that
\begin{equation}\label{eq last (5)}\begin{split}
\sum_{\CS \in \Psi_{S'}}\left|\left\{\ov \in \ZZ_S^n \cap \CS\hat\Omega : \q(\ov) \in \I_S  \right\}\right|&<
\left|\{\ov \in \ZZ_S^n\cap \CT_{S'}\Omega : \q(\ov)\in \I_S\}\right|\\
&<C(S')\prod_{p\in S-S'} (T_p)^{n-2},
\end{split}\end{equation}
where $\CT_{S'}=(T'_p)_{p\in S}$. Here $T'_p=T^0_p$ for $p\in S'$ and $T'_p=T_p$ for $p\in S-S'$. Hence the left hand side of \eqref{eq last (5)} is $o(\prod_{p \in S-S'} T_p^{n-2})$.
If $S'=S$, since
\[\sum_{\CS \in \Psi_{S}}\left|\left\{\ov \in \ZZ_S^n \cap \CS\hat\Omega : \q(\ov) \in \I_S  \right\}\right|\le \left|\left\{\ov \in \ZZ_S^n \cap \CT_0\Omega : \q(\ov) \in \I_S  \right\}\right|<\infty,
\]
it is obviously $o(|\CT|^{n-2})$.
\end{proof}

\end{document}